\newtheorem{definition}{Definition}
\newtheorem{theorem}{Theorem}
\newtheorem{proposition}{Proposition}
\newtheorem{corollary}{Corollary}
\newtheorem{lemma}{Lemma}
\newcommand{\N}{\mathbb{N}}
\newcommand{\Z}{\mathbb{Z}}
\newcommand{\R}{\mathbb{R}}
\begin{document}

\title{On some stable boundary closures \\ of finite difference schemes for the transport equation}

\author{Jean-Fran\c{c}ois {\sc Coulombel}\thanks{Institut de Math\'ematiques de Toulouse ; 
UMR5219, Universit\'e de Toulouse ; CNRS, Universit\'e Paul Sabatier, F-31062 Toulouse Cedex 9, France. 
Email: {\tt jean-francois.coulombel@math.univ-toulouse.fr}. Research of the author was supported by ANR project NABUCO, ANR-17-CE40-0025.} 
$\,$ \& Tomas {\sc Lundquist}\thanks{Institut de Math\'ematiques de Toulouse ; 
UMR5219, Universit\'e de Toulouse ; CNRS, Universit\'e Paul Sabatier, F-31062 Toulouse Cedex 9, France. 
Email: {\tt tomas.lundquist@math.univ-toulouse.fr}. Research of the author was supported by ANR project NABUCO, ANR-17-CE40-0025.}}
\date{\today}
\maketitle

\begin{abstract}
We explore in this article the possibilities and limitations of the so-called energy method for analyzing the stability of finite difference approximations 
to the transport equation with extrapolation numerical boundary conditions at the outflow boundary. We first show that for the most simple schemes, 
namely the explicit schemes with a three point stencil, the energy method can be applied for proving stability estimates when the scheme is implemented 
with either the first or second order extrapolation boundary condition. We then examine the case of five point stencils and give several examples of 
schemes and second order extrapolation numerical boundary conditions for which the energy method produces stability estimates. However, we 
also show that for the standard first or second order translatory extrapolation boundary conditions, the energy method cannot be applied for proving 
stability of the classical fourth order scheme originally proposed by Strang. This gives a clear limitation of the energy method with respect to the more 
general approach based on the normal mode decomposition.
\end{abstract}

\noindent {\small {\bf AMS classification:} 65M12, 65M06, 65M20.}

\noindent {\small {\bf Keywords:} transport equation, numerical schemes, extrapolation boundary condition, energy, stability.}

\section{Introduction}

A general approach for studying the stability of numerical boundary conditions for discretized hyperbolic equations has been initiated in the fundamental 
contribution \cite{gks}. However, for technical reasons, the stability estimates in \cite{gks} are restricted to zero initial data and can be obtained only after 
verifying the fulfillment of some ``algebraic'' condition which is commonly referred to as the Uniform Kreiss-Lopatinskii Condition. The theory in \cite{gks} 
has been successfully applied to some well-known extrapolation procedures at outflow boundaries, see a preliminary announcement in \cite{kreissproc} 
and the complete proof in \cite{goldberg}. In the recent work \cite{CL}, one of the authors has revisited the stability estimates for the outflow extrapolation 
procedures in \cite{kreissproc,goldberg} and shown that a suitably devised energy argument could bypass the (technical and lenghty) arguments of 
\cite{gks}. In this article, we examine in a systematic way the possible applications but also the limitations of the energy method for analyzing the stability 
of extrapolation procedures at an outflow boundary for the most simple one-dimensional transport equation. We begin by reviewing some central concepts 
and notation that will be extensively used throughout the remainder of this paper.

\subsection{Continuous problem}

We consider the scalar advection equation in $1D$ on a semi-infinite interval with an outflow boundary at $x=0$,
\begin{equation}\label{eq:cont}
\begin{aligned}
u_t+u_x &= 0 , \quad -\infty \leq x\leq 0 \, ,\quad t \ge 0 \, .
\end{aligned}
\end{equation}
The differential operator $\partial_t+\partial_x$ is \textit{semi-bounded} on this domain, i.e. for all $0 \le t_1 \le t_2$ and integrable 
smooth functions $\Phi(x,t)$ where $\Phi(-\infty,t)=0$, we have,
\begin{equation}\label{eq:cont_en}
\Phi_t + \Phi_x=0 \quad \Rightarrow \quad \| \Phi \|^2(t_2) \, = \, \| \Phi \|^2(t_1) - \int_{t_1}^{t_2} \Phi(0,\tau)^2 \, {\rm d}\tau \, \leq \, \| \Phi \|^2(t_1) \, ,
\end{equation}
where we have used the standard definition of the $L_2$ inner product and norm (with respect to the space variable $x$) for real valued functions,
\begin{equation*}
(\Phi,\Psi) \, := \, \int_{-\infty}^0 \Phi(x) \, \Psi(x) \, {\rm d}x \, ,\quad \|\Phi\|^2 \, := \,  (\Phi,\Phi) \, .
\end{equation*}
When later discretizing (\ref{eq:cont}) in time and space, we shall seek to mimic the energy balance (\ref{eq:cont_en}).

\subsection{Discretizing on the whole real line}

We seek to discretize the problem (\ref{eq:cont}) in time and space using a finite difference method. In the interior of the spatial domain we use a repeated 
interior stencil, which we analyze for the Cauchy version of (\ref{eq:cont}). We introduce an equispaced spatial grid over the whole real line, 
\begin{equation*}
x_j \, := \, j \, \Delta x, \quad -\infty \leq j \leq \infty \, ,
\end{equation*}
where $\Delta x>0$ is the grid spacing. Unless otherwise stated, sequences ($\Phi_j$) on the grid are assumed to be real valued, and the norm on $\ell^2 (\Z)$ 
is defined by,
$$
\| \Phi \|_{\ell^2(\mathbb{Z})}^2 \, := \, \sum_{j \in \Z} \, \Phi_j^2 \, .
$$

Next, we introduce a time step $\Delta t>0$ (and accordingly, $t_n=n\Delta t$) and consider the ratio $z:=\Delta t/\Delta x$ (the so-called Courant-Friedrichs-Lewy 
parameter \cite{cfl}) as a constant. In what follows, $u_j^n$ stands for an approximation of the solution $u$ to the Cauchy version of \eqref{eq:cont} in the neighborhood 
of $(x_j,t_n)$. We consider explicit finite difference discretizations to (\ref{eq:cont}) of the form:
\begin{equation}\label{eq:scheme}
u_j^{n+1} \, = \, A(z) \, u_j^{n} \, ,\quad -\infty \leq j \leq \infty \, ,\quad n \in \mathbb{N} \, ,
\end{equation}
where $A(z)$ is a polynomial with respect to $z=\Delta t/\Delta x$ of difference operators. E.g, decomposing $A(z)$ in terms of simple shift operators, $A(z)$ then acts 
on sequences $(\Phi_j)$ indexed by $j$ according to,
$$
A(z) \, \Phi_j \, := \, \sum_{\ell=-\ell_-}^{\ell_+} a_\ell (z) \, \Phi_{j+\ell} \, ,
$$
where the integers $\ell_\pm$ mark the extent (or bandwidth) of the difference stencil, and where the coefficients $a_\ell (z)$ are polynomial expressions with respect to $z$. 
Most of the examples we will consider below fall into the classes of three point stencils ($\ell_-= \ell_+ =1$) and five point stencils ($\ell_-=\ell_+=2$). It is clear that $A(z)$ acts 
boundedly on $\ell^2(\mathbb{Z})$, and we shall say that $A(z)$ is a contraction if the norm of $A(z)$ as an operator on $\ell^2(\mathbb{Z})$ is not larger than $1$.

In what follows, we only consider finite difference schemes that are consistent with the transport equation \eqref{eq:cont}. In other words, see for instance \cite{RM,gko}, 
we only consider operators $A(z)$ in \eqref{eq:scheme} that satisfy at least the first order accuracy relations,
$$
\sum_{\ell=-\ell_-}^{\ell_+} a_\ell (z) \, = \, 1 \, ,\quad \text{\rm and } \quad \sum_{\ell=-\ell_-}^{\ell_+} \ell \, a_\ell (z) \, = \, -z \, .
$$

In order to prove stability estimates, be they on the whole space $\mathbb{Z}$ or on the half-space $\mathbb{Z}^-$, we aim to use the energy method in order to prove 
that the scheme is a contraction with respect to a certain norm. For the Cauchy problem, we thus need for $A(z)$ to be a contraction on $\ell^2(\mathbb{Z})$. Since the 
coefficients of the operator $A(z)$ do not depend on $j$, we can achieve this by splitting each local energy balance term into a telescopic and a dissipative part. In other 
words, the goal will be to show that, for an arbitrary sequence $(\Phi_j)_{j \in \mathbb{Z}}$,
\begin{equation}\label{eq:quad_tele}
\Psi_j \, = \, A(z) \, \Phi_j \quad \Rightarrow \quad \Psi_j^2 \, = \, \Phi_j^2 \, + \, T_j-T_{j-1} +S_j \, , \quad S_j \le 0 \, , 
\end{equation}
for all values of $z$ within a certain specified interval. In \eqref{eq:quad_tele}, $T_j$, $T_{j-1}$ and $S_j$ are quadratic quantities over the $\Phi_k$ entries for $j-\ell_- \le k 
\le j+\ell_+$. When summing over all whole numbers, the telescopic part cancels and we are left with
\begin{equation*}
\| \Psi \|_{\ell^2(\mathbb{Z})}^2  \, = \,\| \Phi \|_{\ell^2(\mathbb{Z})}^2+ \sum_{j \in \mathbb{Z}} S_j \, \le \, \| \Phi \|_{\ell^2(\mathbb{Z})}^2 \, ,
\end{equation*}
showing that $A(z)$ is a contraction, just as the continuous problem (\ref{eq:cont_en}).

The main goal of analyzing schemes of the form (\ref{eq:scheme}) will be to derive \emph{integration by parts decompositions} of the form \eqref{eq:quad_tele} with 
$S_j \le 0$ for any $j$. There is a lot of freedom involved in splitting the difference $\Psi_j^2 - \Phi_j^2$ into telescopic and non-telescopic parts as in (\ref{eq:quad_tele}), 
see for instance \cite[Lemma A.1]{CL}. In this paper we will take advantage of this in order to expand and refine the energy method arguments used in \cite{CL}.

\subsubsection{Finite difference operators}

The basic unit for constructing all finite difference stencils will be the backward difference operator,
\begin{equation}
\label{eq:back_diff}
D \, \Phi_j \, := \, \Phi_j -\Phi_{j-1} \, .
\end{equation}
In order to define a parametric family of schemes with a three point stencil, we will use the standard first and second order derivative approximations (the "centered finite 
difference" and the "discrete Laplacian", respectively). On normalized form with respect to the spacing $\Delta x$, we thus consider,
\begin{align}
\label{eq:central_diff}
 D_0 \, \Phi_j \,  = &  \, \dfrac{1}{2} \, \left( D \, \Phi_j +D \, \Phi_{j+1} \right) \, = \, \dfrac{1}{2} \, \left( \Phi_{j+1}-\Phi_{j-1} \right) \, ,\\
 \label{eq:laplace_diff}
\Delta \, \Phi_j \, = & \, D \, \Phi_{j+1} -D \, \Phi_j \, = \, \Phi_{j-1} -2 \, \Phi_j +\Phi_{j+1} \, .
\end{align}
Note that both of these operators can easily be shown to be of accuracy order $2$. To define a family of five point stencils, we also need to approximate the third and fourth 
derivatives, which we define using,
\begin{align}
\label{eq:3rd_der}
D_0 \, \Delta \, \Phi_j \, = & \, \dfrac{1}{2} \, \left( -\Phi_{j-2} +2 \, \Phi_{j-1} -2 \, \Phi_{j+1} +\Phi_{j+2} \right) \, ,\\
\label{eq:4th_der}
\Delta^2 \, \Phi_j \, =  & \, \Phi_{j-2} -4 \, \Phi_{j-1} +6 \, \Phi_j -4 \, \Phi_{j+1} +\Phi_{j+2} \, .
\end{align}
Since we will not consider schemes with more than a five point stencil, in this paper we will limit our attention to only the finite difference operators introduced above.

\subsection{Outflow boundary conditions}

We close the discrete domain at the outflow boundary $x=0$, and define a general vector on the grid as,
\begin{equation*}
\mathbf{\Phi} \, = \, \begin{pmatrix}
\ldots, \Phi_{-2},\Phi_{-1} , \Phi_0 \end{pmatrix} \, .
\end{equation*}
At the points nearest to the outflow boundary at $x=0$, extrapolation to ghost points outside the computational domain are needed in order to close the finite difference 
stencils and thus the numerical scheme (\ref{eq:scheme}) itself. Namely, we need a method to define the $\Phi_1,\dots,\Phi_{\ell_+}$ values in order to compute $\Psi_j 
:=A(z) \, \Phi_j$ for all $j \in \mathbb{Z}^-$. Note that this extrapolation step may or may not reduce the local accuracy order from that of the interior stencil itself.

Next, we define an inner product on $\mathbb{Z}^-$ in the form of a quadrature rule, such that in general only the last $r$ quadrature weights $h_j>0$ are non-unity,
\begin{equation}\label{eq:inner_prod}
\|\boldsymbol{\Phi}\|_H^2 \, := \, (\boldsymbol{\Phi},\boldsymbol{\Phi})_H \, ,\quad 
(\boldsymbol{\Phi},\boldsymbol{\Psi})_H \, := \, \sum_{j\leq -r} \Phi_j \, \Psi_j +\sum_{j= -r+1}^{0} h_j \, \Phi_j \, \Psi_j \, .
\end{equation}
Now let's assume that contractivity (\ref{eq:quad_tele}) holds for all sequences $(\Phi_j) \in \ell^2(\mathbb{Z})$. Then, using an arbitrary set of 
extrapolation conditions to define the $\Phi_j$ values with positive indices $j=1,\dots,\ell_+$, we have,
\begin{equation}\label{defE}
\forall \, j \in \mathbb{Z}^- \, ,\quad \Psi_j \, = \, A(z) \, \Phi_j \quad \Rightarrow  \quad 
\|\boldsymbol{\Psi}\|_H^2 -\|\boldsymbol{\Phi}\|_H^2 -\sum_{j \leq -r} S_j \, = \, \sum_{j=-r}^0 (h_j-h_{j+1}) \, T_j +\sum_{j=-r+1}^0 h_j \, S_j \, =: \, E \, ,
\end{equation}
where we have defined $h_{-r}:=1$ and $h_1:=0$. Since the local dissipation $S_j$ is nonpositive for any $j$, we get from \eqref{defE}:
$$
\forall \, j \in \mathbb{Z}^- \, ,\quad \Psi_j \, = \, A(z) \, \Phi_j \quad \Rightarrow  \quad 
\|\boldsymbol{\Psi}\|_H^2 \, \le \,\|\boldsymbol{\Phi}\|_H^2+ E \, .
$$
The goal at this point is to determine the extrapolation procedure and the quadrature weights such that $E$ is, at least, non-positive. In that case, the combination 
of the operator $A(z)$ with the extrapolation procedure yields a contraction on $\ell^2(\mathbb{Z}^-)$ provided that the latter space is equipped with the norm 
(\ref{eq:inner_prod}). We formalize this notion below.

\begin{definition}\label{def:semi_b_stab}
Together with suitable extrapolation conditions at an outflow boundary, we say that the operator $A(z)$ in (\ref{eq:scheme}) is semi-bounded with respect 
to the inner product defined in (\ref{eq:inner_prod}) if, for a given $z\geq 0$, the following two conditions are satified.
\begin{enumerate}
\item The scheme is contractive, that is $A(z)$ admits a decomposition (\ref{eq:quad_tele}) with $S_j \le 0$.
\item The boundary contribution is non-positive, i.e.
\begin{equation}\label{eq:semi_b_stab}
E \, := \, \sum_{j =-r}^0 (h_j-h_{j+1}) \, T_j +\sum_{j=-r+1}^0 h_j \, S_j \leq 0 \, ,
\end{equation}
where we use the convention $h_{-r} :=1$ and $h_1 :=0$.
\end{enumerate}
\end{definition}

As a matter of fact, it is useful for the purpose of convergence proofs (see, e.g., \cite{gustafsson,gko,CL}) to infer from \eqref{defE} a trace estimate on the sequence 
$\boldsymbol{\Phi}$. This will be the case provided that $E$ is a negative definite quadratic form of its arguments, as evidenced below on several examples. 
This energy argument bypasses, as in \cite{wu,jfcag,CL}, the lengthy verification of the fulfillment of the so-called Uniform Lopatinskii Condition. We refer to 
\cite{gks,jfcnotes} for a general presentation of this topic.

The rest of this article is organized as follows. In Section \ref{sect:3pts}, we examine the application of the energy method described above in the case of 
three point schemes ($\ell_-=\ell_+=1$). Since such schemes have at most second order accuracy \cite{gko}, we restrict our analysis to the case of first and 
second order extrapolation at the outflow boundary. We then examine the case of five point schemes ($\ell_-=\ell_+=2$) in Section \ref{sect:5pts}. Since the 
involved algebra becomes heavier, we restrict sometimes the analysis to three particular cases which are analogues of some well-known three point schemes. 
We give examples of extrapolation procedures for which we can construct a suitable energy that yields semi-boundedness. In contrast to what is more commonly 
used, these energy stable extrapolation procedures are not translation invariant. Finally, in section 5 we demonstrate the limitation of more standard extrapolation 
procedures, proving that for a fourth order five point scheme combined with the most common first and second order extrapolation procedures, the energy method 
actually fails to work, though the corresponding closure of the operator $A(z)$ is known to be power-bounded. This shows that the normal mode analysis developed 
in \cite{gks} and subsequent works is sometimes necessary to capture stability in general.

\section{Three point schemes}
\label{sect:3pts}

The family of three point stencils of at least first order accuracy can be defined using a single free parameter $\nu$ as follows (see \cite{RM,gko}),
\begin{equation}
\label{defAz3pts}
A(z) \, = \, I -z \, D_0+\dfrac{\nu}{2} \, \Delta \, .
\end{equation}
Famous examples include the Euler forward ($\nu = 0$), Lax-Friedrichs ($\nu = 1$), the two-point upwind ($\nu = z$) as well as the second order accurate 
Lax-Wendroff $(\nu = z^2)$ schemes.

\subsection{Contractivity on the whole real line}

For the family of schemes considered in \eqref{defAz3pts}, our first result is the following.

\begin{lemma}
\label{lem1}
Let $z,\nu \in \R$, and consider the scheme (\ref{eq:scheme}) with $A(z)$ as in \eqref{defAz3pts}. Then (\ref{eq:quad_tele}) holds with
\begin{equation}\label{eq:tele_threepoint}
T_j \, := \, \begin{pmatrix}
\Phi_j \\ D \, \Phi_{j+1} \end{pmatrix}^T Q \, \begin{pmatrix}
\Phi_j \\ D \, \Phi_{j+1} \end{pmatrix} \, , \quad Q \, := \, \begin{pmatrix}
-z & \dfrac{\nu-z}{2} \\
\dfrac{\nu-z}{2} & \dfrac{\nu \, (1-z)}{2} \end{pmatrix} \, ,
\end{equation}
and
\begin{equation}\label{eq:symm_threepoint}
S_j \, = \, \begin{pmatrix}
D \, \Phi_j \\ D \, \Phi_{j+1} \end{pmatrix}^T M \, \begin{pmatrix}
D \, \Phi_j \\ D \, \Phi_{j+1} \end{pmatrix} \, ,\quad M \, := \, a_1 \, \begin{pmatrix}
1 & 0 \\ 0 & 1 \end{pmatrix} +a_2 \, \begin{pmatrix}
1 & -1 \\ -1 & 1 \end{pmatrix} \, ,
\end{equation}
where
\begin{equation*}
a_1 \, := \, -\dfrac{\nu-z^2}{2} \, ,\quad a_2 \, := \, \dfrac{\nu^2-z^2}{4} \, .
\end{equation*}
In particular, $A(z)$ is a contraction on $\ell^2(\mathbb{Z})$, i.e. $S_j\le 0$, if $z$ and $\nu$ satisfy the relation $z^2 \le \nu \le 1$.

\begin{proof}
With $A(z)$ as in \eqref{defAz3pts}, we have in (\ref{eq:quad_tele}),
\begin{equation}\label{eq:int_en_threepoint}
\Psi_j^2 - \Phi_j^2 \, = \, -2 \, z \, \Phi_j \, (D_0 \, \Phi_j) +z^2 \, (D_0 \, \Phi_j)^2 +\nu \, \Phi_j \, (\Delta \, \Phi_j) -z \, \nu \, (D_0 \, \Phi_j) \, (\Delta \, \Phi_j) 
+\dfrac{\nu^2}{4} \, (\Delta \, \Phi_j)^2 \, .
\end{equation}
In order to split this expression into telescopic and non-telescopic parts, we first note that the $(D_0 \, \Phi_j)^2$ term 
can be expressed as the sum of squares (this is nothing but the parallelogram identity), 
\begin{equation}
\label{eq:tele_01}
(D_0 \, \Phi_j)^2 \, = \, \dfrac{1}{4} \, \left[ 2 \, (D \, \Phi_j)^2 +2 \, (D \, \Phi_{j+1})^2 -(\Delta \, \Phi_j)^2 \right] \, .
\end{equation}
Moreover, we will use the purely telescopic formulas,
\begin{align}
\label{eq:tele_02}
2 \, \Phi_j \, D_0 \, \Phi_j & \, = \, \Phi_j \, \Phi_{j+1} -\Phi_{j-1} \, \Phi_j \, = \, 
\left( \Phi_j^2 +\Phi_j \, D \, \Phi_{j+1} \right) -\left( \Phi_{j-1}^2 +\Phi_{j-1} \, D \, \Phi_j \right) \, ,\\
\label{eq:tele_03}
2 \, D_0 \, \Phi_j \, \Delta \, \Phi_j & \, = \, (D \, \Phi_{j+1})^2 -(D \, \Phi_j)^2 \, .
\end{align}
Finally, the $\Phi_j \, \Delta \, \Phi_j$ term can be written as the combination of non-telescopic (sum of squares) and telescopic parts\footnote{The decomposition 
mimics the equality $2 \, u \, u'' =-2\, (u')^2 +(u^2)''$.},
\begin{align}
\notag 
2 \, \Phi_j \, \Delta \, \Phi_j & \, = \, -(D \, \Phi_j)^2 -(D \, \Phi_{j+1})^2 +\Phi_{j+1}^2 -2 \, \Phi_j^2 +\Phi_{j-1}^2 \\
\label{eq:tele_04}
& \, = \, -(D \, \Phi_j)^2 -(D \, \Phi_{j+1})^2 +\left[ 2 \, \Phi_j \, D \, \Phi_{j+1} +(D \, \Phi_{j+1})^2 \right] -\left[ 2 \, \Phi_{j-1} \, D \, \Phi_j +(D \, \Phi_j)^2 \right] \, .
\end{align}
Inserting the formulas (\ref{eq:tele_01}) through (\ref{eq:tele_04}) into (\ref{eq:int_en_threepoint}), we get the desired form $\Psi_j^2 -\Phi_j^2=T_j-T_{j-1}+S_j$ 
with $T_j$ directly as in \eqref{eq:tele_threepoint} and $S_j$ given by
\begin{equation*}
S_j \, = \, a_1 \, \left[ (D \, \Phi_j)^2 +(D \, \Phi_{j+1})^2 \right] +a_2 \, (\Delta \, \Phi_j)^2 \, .
\end{equation*}
From the definition of the discrete Laplacian $\Delta$ in (\ref{eq:laplace_diff}), we can further rewrite this into (\ref{eq:symm_threepoint}). This proves the first part 
of the Lemma.

Next, an orthogonal set of eigenvectors to $M$ is given by $(1 \ 1)^T$ and $(1 \ -1)^T$, and the associated 
eigenvalues are,
\begin{equation*}
\lambda_1 \, = \, a_1 \, = \, -\dfrac{\nu-z^2}{2} \, ,\quad \lambda_2 \, = \, a_1+2 \, a_2  \, = \, -\dfrac{\nu \, (1-\nu)}{2} \, .
\end{equation*}
We have,
\begin{equation*}
\begin{aligned}
\lambda_1 \leq 0 & \quad \Leftrightarrow & z^2 & \leq \nu \, ,\\
\lambda_2 \leq 0 & \quad \Leftrightarrow & \nu & \in [0,1] \, .
\end{aligned}
\end{equation*}
and we have thus shown that the scheme defined by $A(z)$ is a contraction if $z^2 \leq \nu \leq 1$. 
\end{proof}
\end{lemma}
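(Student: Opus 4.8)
The plan is to obtain the decomposition \eqref{eq:quad_tele} by directly expanding $\Psi_j^2 - \Phi_j^2$ where $\Psi_j = A(z)\Phi_j = \Phi_j - z D_0 \Phi_j + \frac{\nu}{2}\Delta\Phi_j$, and then to recognize each of the resulting five "cross terms" as a combination of purely telescopic expressions (differences of the form $G_j - G_{j-1}$) and genuine quadratic sum-of-squares remainders. The natural variables in which to write everything are the backward differences $D\Phi_j$ and $D\Phi_{j+1}$, together with $\Phi_j$ itself, since the discrete Laplacian satisfies $\Delta\Phi_j = D\Phi_{j+1} - D\Phi_j$ and the centered difference satisfies $D_0\Phi_j = \frac{1}{2}(D\Phi_j + D\Phi_{j+1})$. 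First I would square out $\Psi_j^2 - \Phi_j^2$ to get the five terms $-2z\,\Phi_j D_0\Phi_j$, $z^2 (D_0\Phi_j)^2$, $\nu\,\Phi_j\Delta\Phi_j$, $-z\nu\,(D_0\Phi_j)(\Delta\Phi_j)$, and $\tfrac{\nu^2}{4}(\Delta\Phi_j)^2$ — this is exactly \eqref{eq:int_en_threepoint}.

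The second step is to establish the handful of algebraic identities that let me split each term. The identity $(D_0\Phi_j)^2 = \tfrac14[\,2(D\Phi_j)^2 + 2(D\Phi_{j+1})^2 - (\Delta\Phi_j)^2\,]$ is just the parallelogram law applied to $D\Phi_j$ and $D\Phi_{j+1}$. The term $2\Phi_j D_0\Phi_j = \Phi_j\Phi_{j+1} - \Phi_{j-1}\Phi_j$ is telescopic on the nose, and after writing $\Phi_{j+1} = \Phi_j + D\Phi_{j+1}$ I can put it into the $T_j$-form used in \eqref{eq:tele_threepoint}. The term $2 D_0\Phi_j \Delta\Phi_j = (D\Phi_{j+1})^2 - (D\Phi_j)^2$ is telescopic because $D_0\Phi_j = \tfrac12(D\Phi_j + D\Phi_{j+1})$ multiplied by $\Delta\Phi_j = D\Phi_{j+1} - D\Phi_j$ collapses. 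Finally $2\Phi_j\Delta\Phi_j$ mimics $2uu'' = -2(u')^2 + (u^2)''$: writing $\Phi_{j\pm1}^2$ in terms of $\Phi_j^2$ and the backward differences produces the telescopic bracket plus the negative sum-of-squares piece. Substituting these four identities into \eqref{eq:int_en_threepoint} and collecting the non-telescopic remainder gives $S_j = a_1[(D\Phi_j)^2 + (D\Phi_{j+1})^2] + a_2(\Delta\Phi_j)^2$ with $a_1 = -\tfrac{\nu - z^2}{2}$ and $a_2 = \tfrac{\nu^2 - z^2}{4}$; expanding $(\Delta\Phi_j)^2 = (D\Phi_{j+1} - D\Phi_j)^2$ recovers the matrix form \eqref{eq:symm_threepoint}, while the telescopic pieces assemble into the claimed $T_j$ with the stated $2\times2$ matrix $Q$.

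The final step is the sign analysis of $S_j$. Since $M = a_1 I + a_2\begin{pmatrix}1 & -1 \\ -1 & 1\end{pmatrix}$, the vectors $(1,1)^T$ and $(1,-1)^T$ are a common eigenbasis: the first has eigenvalue $\lambda_1 = a_1 = -\tfrac{\nu - z^2}{2}$ (since $a_2$'s matrix kills $(1,1)^T$), and the second has eigenvalue $\lambda_2 = a_1 + 2a_2 = -\tfrac{\nu - z^2}{2} + \tfrac{\nu^2 - z^2}{2} = -\tfrac{\nu(1-\nu)}{2}$ after simplification. Then $M \preceq 0$ iff $\lambda_1 \le 0$ and $\lambda_2 \le 0$, i.e. iff $z^2 \le \nu$ and $\nu \in [0,1]$, which combine to $z^2 \le \nu \le 1$; this forces $S_j \le 0$ for every $j$.

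Honestly, there is no serious obstacle here: the result is an exact algebraic identity plus a $2\times2$ eigenvalue computation, so the only real work is bookkeeping — making sure the telescopic brackets from the four substitutions glue together into precisely the quadratic form $T_j$ with matrix $Q$ rather than into $T_j$ shifted or rescaled. The one place to be careful is the cross term $\Phi_j D\Phi_{j+1}$ coming out of $2\Phi_j D_0\Phi_j$ versus the one coming out of $2\Phi_j\Delta\Phi_j$: these must combine with the right coefficients ($-z$ from the first, $\nu$ from the second, but $\nu$ appears with a telescopic bracket $2\Phi_j D\Phi_{j+1} + (D\Phi_{j+1})^2$ rather than bare), so tracking that $(D\Phi_{j+1})^2$ contribution correctly into the $(2,2)$-entry $\tfrac{\nu(1-z)}{2}$ of $Q$ is the spot most likely to produce a sign slip.
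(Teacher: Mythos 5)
Your proposal is correct and follows essentially the same route as the paper: the same expansion of $\Psi_j^2-\Phi_j^2$, the same four splitting identities (parallelogram law, the two purely telescopic formulas, and the $2uu''=-2(u')^2+(u^2)''$ analogue), and the same eigenvalue computation for $M$ via the common eigenbasis $(1,1)^T$, $(1,-1)^T$. No gaps; the bookkeeping concern you flag about assembling the cross terms into the $(2,2)$-entry of $Q$ is exactly where the paper's computation also does its work, and it goes through.
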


It is worth to mention here that the same exact bounds on $z,\nu$ for $\ell^2$ stability given in Lemma \ref{lem1} above can also be derived using Fourier analysis 
(they are both necessary and sufficient for $A(z)$ to be a contraction). However, for a wider stencil we can not in general guarantee that the energy method will yield 
the same bounds as obtained with the Fourier method. Since we will use the energy method to derive extrapolation boundary conditions leading to semi-boundedness, 
we shall in this paper only consider stencils for which contractivity can be shown using the energy method in a similar fashion to the one above.

To conclude this section, we write down the eigenvalues associated with the quadratic form $S_j$ for the important special cases mentioned 
at the beginning of this section. The Euler forward method ($\nu = 0$) yields,
\begin{equation*}
\lambda_1 \, = \, \dfrac{z^2}{2} \, ,\quad \lambda_2 \, = \, 0 \, ,
\end{equation*}
showing that this scheme is not contractive for any $z$. Moreover, the Lax-Friedrichs scheme ($\nu = 1$) yields,
\begin{equation*}
\lambda_1 \, = \, -\dfrac{1-z^2}{2} \, ,\quad \lambda_2 \, = \, 0 \, ,
\end{equation*}
the upwind scheme ($\nu = z$) yields,
\begin{equation*}
\lambda_1 \, = \, \lambda_2 \, = \, -\dfrac{z \, (1-z)}{2} \, ,
\end{equation*} 
and finally the Lax-Wendroff $(\nu = z^2)$ scheme,
\begin{equation*}
\lambda_1 \, = \, 0 \, ,\quad \lambda_2 \, = \, -\dfrac{z^2 \, (1-z^2)}{4} \, .
\end{equation*}
This shows that the Lax-Friedrichs and Lax-Wendroff schemes are both contractive for all $|z| \leq 1$ (meaning that we can freely change the sign in front 
of $u_x$ in (\ref{eq:cont}) without modifying the stability property of the scheme), while the upwind scheme is contractive for $0\leq z \leq 1$.

\subsection{Outflow boundary conditions}

For simplicity, in \eqref{eq:inner_prod} we let $r=1$ from the start, i.e. the norm of $\boldsymbol{\Phi} \in \ell^2(\mathbb{Z}^-)$ in (\ref{eq:inner_prod}) is defined by
\begin{equation*}
\|\boldsymbol{\Phi}\|_H^2 \, = \, \sum_{j \leq -1} \Phi_j^2 +h_0 \, \Phi_0^2 \, ,
\end{equation*}
and for now we consider the last quadrature weight $h_0$ as a free (positive) parameter. In (\ref{eq:semi_b_stab}) we then have, using Lemma \ref{lem1} 
from the previous paragraph,
\begin{align}
E \, =& \, (1-h_0) \, T_{-1} +h_0 \, T_0 +h_0 \, S_0 \notag \\
=& \, (1-h_0) \, \begin{pmatrix}
\Phi_{-1} \\ D \, \Phi_0 \end{pmatrix}^T \, Q \, \begin{pmatrix}
\Phi_{-1} \\ D \, \Phi_0 \end{pmatrix} +h_0 \, \begin{pmatrix}
\Phi_0 \\ D \, \Phi_1 \end{pmatrix}^T \, Q \, \begin{pmatrix}
\Phi_0 \\ D \, \Phi_1 \end{pmatrix} +h_0 \, \begin{pmatrix}
D \, \Phi_0 \\ D \, \Phi_1 \end{pmatrix}^T \, M \, \begin{pmatrix}
D \, \Phi_0 \\ D \, \Phi_1 \end{pmatrix} \, .\label{expressE}
\end{align}
Inserting \eqref{eq:tele_threepoint} and \eqref{eq:symm_threepoint} into the above expression \eqref{expressE} for $E$ gives us right away a stability estimate 
for the most popular first order extrapolation boundary closure.

\begin{corollary}
\label{cor1}
Let $z \in (0,1]$ and let $\nu$ satisfy $z^2 \le \nu \le 1$. Then the scheme (\ref{defAz3pts}) together with the first order extrapolation condition $\Phi_1=\Phi_0$ is 
semi-bounded. In particular, the energy balance is given by,
\begin{equation}
\label{energie3ptsneumann11}
\sum_{j \le 0} \Psi_j^2 \le \sum_{j \le 0} \Phi_j^2 -z \, \Phi_0^2  \le  \, \sum_{j \le 0}  \Phi_j^2 \,  ,
\end{equation}
which exactly mimics (\ref{eq:cont_en}) with added dissipation.
\end{corollary}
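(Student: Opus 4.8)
The plan is to exploit the single free weight $h_0$ in order to annihilate the telescopic term attached to the node $j=-1$ in \eqref{expressE}, and then to observe that the first order extrapolation collapses every remaining boundary quantity onto its $\Phi_0$-component, where the relevant matrix entries already have the correct sign.

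First I would choose $h_0 = 1$. This serves two purposes: the $H$-norm in \eqref{eq:inner_prod} becomes the standard norm $\sum_{j\le 0}\Phi_j^2$ that appears in the statement, and the coefficient $1-h_0$ multiplying $T_{-1}$ in \eqref{expressE} vanishes, so that simply $E = T_0 + S_0$. Discarding $T_{-1}$ is essential, since the matrix $Q$ in \eqref{eq:tele_threepoint} is in general indefinite (its $(1,1)$ entry is $-z<0$ while $\det Q\le 0$ for $z\in(0,1)$ and $z^2\le\nu\le1$), so no other value of $h_0$ would leave a term of definite sign.

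Next I would insert the extrapolation $\Phi_1=\Phi_0$, i.e.\ $D\,\Phi_1=0$. Then, evaluating \eqref{eq:tele_threepoint}--\eqref{eq:symm_threepoint} at $j=0$, the second entry of both vectors vanishes and only the $(1,1)$ entries of $Q$ and $M$ survive:
\[
T_0 = Q_{11}\,\Phi_0^2 = -z\,\Phi_0^2, \qquad S_0 = M_{11}\,(D\,\Phi_0)^2, \qquad M_{11} = a_1+a_2 = \tfrac14\bigl(\nu^2-2\nu+z^2\bigr).
\]
Since $M_{11}=\tfrac12(\lambda_1+\lambda_2)$ is the average of the two eigenvalues of $M$ computed in the proof of Lemma \ref{lem1}, and both of those are nonpositive exactly when $z^2\le\nu\le1$, we get $M_{11}\le 0$, hence $S_0\le 0$ and
\[
E \;=\; -z\,\Phi_0^2 + M_{11}\,(D\,\Phi_0)^2 \;\le\; -z\,\Phi_0^2 \;\le\; 0 ,
\]
using $z>0$. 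This is precisely conditions 1 and 2 of Definition \ref{def:semi_b_stab} (condition 1 being Lemma \ref{lem1} itself), so the scheme is semi-bounded.

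Finally, for the quantitative balance \eqref{energie3ptsneumann11} I would return to the identity \eqref{defE} with $r=1$, which here reads $\sum_{j\le 0}\Psi_j^2 = \sum_{j\le 0}\Phi_j^2 + \sum_{j\le -1}S_j + E$; combining $S_j\le 0$ for $j\le -1$ (Lemma \ref{lem1}) with $E\le -z\,\Phi_0^2$ yields the two displayed inequalities. I do not anticipate any real obstacle: the only computation required is the sign of $M_{11}$, which is already contained in the eigenvalue analysis of $M$ done for Lemma \ref{lem1}; the genuine content of the corollary is the bookkeeping observation that first order extrapolation kills exactly the $D\,\Phi_1$ contributions, leaving the negative diagonal entry $-z$ of the telescopic matrix $Q$ as the sole surviving boundary term.
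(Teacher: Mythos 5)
Your proposal is correct and follows essentially the same route as the paper: choose $h_0=1$ so that the $T_{-1}$ term drops out of \eqref{expressE}, use $D\,\Phi_1=0$ to reduce $E$ to $-z\,\Phi_0^2+M_{11}\,(D\,\Phi_0)^2\le -z\,\Phi_0^2$, and conclude via \eqref{defE}. Your explicit verification that $M_{11}=a_1+a_2=\tfrac12(\lambda_1+\lambda_2)\le 0$ is a small elaboration of what the paper leaves implicit (namely $S_0\le 0$ under $z^2\le\nu\le 1$), but the argument is the same.
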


\begin{proof}
We choose $h_0=1$ for the last quadrature weight. From the extrapolation condition we can use $D \, \Phi_1=0$ in (\ref{expressE}) to immediately get,
$$
E \, = \, \, \begin{pmatrix}
\Phi_{0} \\ 0 \end{pmatrix}^T \, Q \, \begin{pmatrix}
\Phi_{0} \\ 0 \end{pmatrix} +\begin{pmatrix}
D \, \Phi_0 \\ 0 \end{pmatrix}^T \, M \, \begin{pmatrix}
D \, \Phi_0 \\ 0 \end{pmatrix} \, = \, -z \, \Phi_0^2 +S_0 \, .
$$
The energy balance \eqref{energie3ptsneumann11} then follows from \eqref{defE} and the fact that $S_j \le 0$ for any $j \le 0$ if $z^2 \le \nu \le 1$.
\end{proof}

We now examine the case of a second order extrapolation condition at the outflow boundary, that is we impose $\Delta \, \Phi_0 \, = \, 0$. Our result is the following.

\begin{corollary}
\label{cor2}
Let $z \in (0,1]$ and let $\nu$ satisfy $z^2 \le \nu \le 1$, and consider the scheme (\ref{defAz3pts}) together with the second order extrapolation condition 
$\Delta \, \Phi_0=0$. Then $E$ in (\ref{eq:semi_b_stab}) is given by
\begin{equation}
\label{energie3pts-2}
E \, = \, \begin{pmatrix}
\Phi_0 \\ D \, \Phi_0 \end{pmatrix}^T \, B \, \begin{pmatrix}
\Phi_0 \\ D \, \Phi_0 \end{pmatrix} \, ,\quad B \, = \, \begin{pmatrix}
-z & z(\frac{1}{2}-h_0)+\frac{\nu}{2} \\
z(\frac{1}{2}-h_0)+\frac{\nu}{2} & h_0 z^2-\frac{\nu(1+z)}{2}
\end{pmatrix} \, .
\end{equation}
Moreover, $B$ is negative definite (and thus the scheme is semi-bounded) for the choice $h_0 = (1-z+\nu/z)/2$.

\begin{proof}
From the second order extrapolation condition, we have,
\begin{equation*}
\Delta \, \Phi_0 \, = \, 0 \quad \Rightarrow \quad D \, \Phi_1 \, = \, D \, \Phi_0 \, .
\end{equation*}
We also have, from the definition of $D$ in (\ref{eq:back_diff}),
\begin{equation*}
\Phi_{-1} \, = \, \Phi_0 -D \, \Phi_0 \, .
\end{equation*}
Going back to \eqref{expressE}, we now write $E$ as,
\begin{equation*}
E \, = \, (1-h_0) \, \begin{pmatrix}
\Phi_0-D \, \Phi_0 \\ D \, \Phi_0 \end{pmatrix}^T \, Q \, \begin{pmatrix}
\Phi_0-D \, \Phi_0 \\ D \, \Phi_0 \end{pmatrix} 
+h_0 \, \begin{pmatrix}
\Phi_0 \\ D \, \Phi_0 \end{pmatrix}^T \, Q \, \begin{pmatrix}
\Phi_0 \\ D \, \Phi_0 \end{pmatrix} 
+h_0 \, \begin{pmatrix}
D \, \Phi_0 \\ D \, \Phi_0 \end{pmatrix}^T \, M \, \begin{pmatrix}
D \, \Phi_0 \\ D \, \Phi_0 \end{pmatrix} \, ,
\end{equation*}
which, using (\ref{eq:tele_threepoint}) and (\ref{eq:symm_threepoint}), can be further simplified into (\ref{energie3pts-2}).

In order to see whether $B$  in (\ref{energie3pts-2}) is negative definite, we rotate to diagonal form using the similarity transformation, 
\begin{equation*}
R \, := \, \begin{pmatrix}
1 & \left( \dfrac{\nu+z}{2} -z \, h_0 \right) /z \\
0 & 1 \end{pmatrix} \quad \Rightarrow  \quad R ^T B \, R \, = \, \begin{pmatrix}
-z & 0 \\
0 & \gamma \end{pmatrix} \, ,
\end{equation*}
where
\begin{equation*}
\gamma \, := \, \left[ \left( \nu +z -2 \, z \, h_0 \right)^2 +4 \, h_0 \, z^3 -2 \, z \, (1+z) \, \nu \right] /(4 \, z) \, .
\end{equation*}
The minimum of $\gamma$ with respect to $h_0$ is obtained for
\begin{equation*}
\dfrac{\partial \gamma}{\partial h_0} \, = \, z^2 -z -\nu +2 \, h_0 \, z \, = \, 0 \quad \Leftrightarrow \quad h_0 \, = \, \dfrac{\frac{\nu}{z}-z+1}{2} \ge \dfrac{1}{2} \, ,
\end{equation*}
which in all cases leads to the same negative value of $\gamma$,
\begin{equation*}
\gamma \, = \, -z^3/4 \, .
\end{equation*}
With this choice of $h_0$, the quantity \eqref{eq:inner_prod} (with $r=1$) defines a norm on $\ell^2(-\infty,0)$ and the second condition of Definition 
\ref{def:semi_b_stab} is always satisfied for $z \in (0,1]$ and $\nu \in [z^2,1]$. Semi-boundedness is thus obtained as long as the interior scheme is 
contractive. 
\end{proof}
\end{corollary}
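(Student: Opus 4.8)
The plan is to reduce the boundary functional $E$ from \eqref{expressE} to an explicit quadratic form in the two scalars $\Phi_0$ and $D\,\Phi_0$, and then certify its negative definiteness by completing the square. First I would exploit the second order extrapolation condition $\Delta\,\Phi_0 = 0$: by the definition of $\Delta$ in \eqref{eq:laplace_diff}, this is exactly $D\,\Phi_1 = D\,\Phi_0$, so the ghost value disappears and every slot $D\,\Phi_1$ in \eqref{expressE} can be replaced by $D\,\Phi_0$. Likewise $\Phi_{-1} = \Phi_0 - D\,\Phi_0$ by the definition \eqref{eq:back_diff} of $D$. Substituting these two relations into the three quadratic blocks of \eqref{expressE} — the $Q$-block at $j=-1$, the $Q$-block at $j=0$, and the $M$-block — turns each one into a quadratic form in $(\Phi_0,D\,\Phi_0)$, and adding them up with the weights $1-h_0$, $h_0$, $h_0$ yields the symmetric matrix $B$ displayed in \eqref{energie3pts-2}. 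This step is pure bookkeeping, using the explicit entries of $Q$ and $M$ furnished by Lemma \ref{lem1}.

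Second, I would determine when $B$ is negative definite. Since $z>0$, the $(1,1)$ entry of $B$ equals $-z<0$, so the off-diagonal entry can be eliminated by a shear, i.e.\ a unit upper-triangular congruence $R$ chosen so that $R^T B\,R$ is diagonal with first entry $-z$ and second entry some $\gamma=\gamma(h_0)$; the only thing to verify here is the elementary identity for $R^T B\,R$, a one-line computation. Negative definiteness of $B$ is then equivalent to $\gamma<0$. Viewing $\gamma$ as a quadratic in $h_0$ with positive leading coefficient (the numerator contributes $4z^2 h_0^2$, hence $z\,h_0^2$ after dividing by $4z$), I would minimise over $h_0$; setting $\partial\gamma/\partial h_0=0$ gives $h_0=(1-z+\nu/z)/2$, and plugging this back collapses $\gamma$ to the constant $-z^3/4$, which is strictly negative for $z\in(0,1]$. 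Hence $B$ is negative definite for that value of $h_0$.

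Finally, two short checks close the argument. One must confirm that the chosen weight $h_0=(1-z+\nu/z)/2$ is strictly positive so that \eqref{eq:inner_prod} with $r=1$ really defines a norm on $\ell^2(\mathbb{Z}^-)$; indeed, since $z\le 1$ and $\nu\ge z^2$ one has $\nu/z\ge z$, whence $h_0\ge \tfrac12>0$. And the first condition of Definition \ref{def:semi_b_stab}, contractivity of the interior operator ($S_j\le 0$), is exactly the hypothesis $z^2\le\nu\le 1$ guaranteed by Lemma \ref{lem1}. Combining these, both conditions of Definition \ref{def:semi_b_stab} hold, so the scheme \eqref{defAz3pts} with the second order extrapolation closure is semi-bounded.

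I expect the only mildly delicate point to be the algebraic collection into $B$ and the subsequent minimisation of $\gamma$: these are routine but error-prone, and the clean cancellation to $\gamma=-z^3/4$ serves as the sanity check that the computation is correct. Everything else — the shear diagonalisation, the sign of $-z$, and the positivity of $h_0$ — is immediate from $z\in(0,1]$ and $\nu\in[z^2,1]$.
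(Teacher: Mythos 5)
Your proposal is correct and follows essentially the same route as the paper: substitute $D\,\Phi_1=D\,\Phi_0$ and $\Phi_{-1}=\Phi_0-D\,\Phi_0$ into \eqref{expressE} to obtain $B$, diagonalise by the unit upper-triangular congruence $R$, minimise $\gamma$ over $h_0$ to get $h_0=(1-z+\nu/z)/2$ and $\gamma=-z^3/4<0$, and check $h_0\ge 1/2$ so that \eqref{eq:inner_prod} is a norm. No substantive differences from the paper's argument.
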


In particular, the second order accurate Lax-Wendroff scheme ($\nu=z^2$) is associated with the likewise second order accurate Trapezoidal rule with 
the constant value $h_0 = 1/2$ (independently of $z \in (0,1]$). For all other three-point schemes, the above value for $h_0$ as obtained with this method 
depends on $z$ and grows to infinity as $z$ approaches $0$ ($\nu$ being fixed).

\section{Five point schemes}
\label{sect:5pts}

In this preliminary work we do not aim for a complete theory encompassing finite difference schemes of any order. Instead we go on to consider a slightly 
more involved example than in the previous section, namely the family of schemes with a five point stencil and at least second order accuracy. Such schemes 
can be parametrized with two free parameters $\sigma,\tau$ as follows,
\begin{equation}\label{defAz5pts}
A(z) \, = \, I -z \, D_0 +\dfrac{z^2}{2} \, \Delta +\sigma \, D_0 \, \Delta +\tau \, \Delta^2 \, .
\end{equation}
We first derive a general integration by parts decomposition formula for $A(z)$ and thereby obtain three interesting examples for which $A(z)$ is contractive.

\subsection{Contractivity on the whole real line}

Generalizing Lemma \ref{lem1} to the family of operators in \eqref{defAz5pts}, our result is the following.

\begin{lemma}
\label{lem2}
Let $z,\sigma,\tau \in \R$, and consider the scheme (\ref{eq:scheme}) with $A(z)$ as in \eqref{defAz5pts}. Then (\ref{eq:quad_tele}) holds with
\begin{multline}\label{eq:tele_fiveepoint}
T_j \, := \, \begin{pmatrix}
\Phi_j \\ D \, \Phi_{j+1} \\ \Delta \, \Phi_j \\ D \, \Delta \, \Phi_{j+1} \end{pmatrix} ^T Q \, \begin{pmatrix}
\Phi_j \\ D \, \Phi_{j+1} \\  \Delta \, \Phi_j \\ D \, \Delta \, \Phi_{j+1} \end{pmatrix} \, , \\
Q \, := \, \begin{pmatrix}
-z & -\frac{z \, (1-z)}{2} & \sigma & \frac{\sigma}{2} +\tau \\ 
-\frac{z \, (1-z)}{2} & \frac{z^2 \, (1-z)}{2} -\sigma & \frac{\sigma}{2} -z \, \sigma - \tau & -z \, \left( \frac{\sigma}{2} +\tau \right) \\
\sigma & \frac{\sigma}{2} -z \, \sigma -\tau & \frac{z^2 \, \sigma}{2} +z \, \tau & \frac{z \, (1+z)}{2} \, \left( \frac{\sigma}{2} +\tau \right) -\frac{a_1}{3} \\
\frac{\sigma}{2}+\tau & -z \, \left( \frac{\sigma}{2}+\tau \right) & \frac{z \, (1+z)}{2} \, \left( \frac{\sigma}{2} +\tau \right) -\frac{a_1}{3} & 
\frac{z \, \sigma}{4} +\frac{z^2 \, \tau}{2} +\sigma \, \tau -\frac{a_1}{3} \end{pmatrix} \, ,
\end{multline}
and
\begin{equation}\label{eq:symm_fiveepoint}
S_j := \begin{pmatrix}
\Delta \, \Phi_{j-1} \\ \Delta \, \Phi_j \\ \Delta \, \Phi_{j+1}
\end{pmatrix}^TM\begin{pmatrix}
\Delta \, \Phi_{j-1} \\ \Delta \, \Phi_j \\ \Delta \, \Phi_{j+1}
\end{pmatrix} \, , \, M \, := \, \dfrac{a_1}{3} \, \begin{pmatrix}
1 & 0 & 0 \\ 0 & 1 & 0 \\ 0 & 0  & 1
\end{pmatrix} +a_2 \, \begin{pmatrix}
1 & -1 & 0 \\
-1 & 2 & -1 \\
0 & -1 & 1
\end{pmatrix} +a_3 \, \begin{pmatrix}
1 & -2 & 1 \\
-2 & 4 & -2 \\
1 & -2 & 1 
\end{pmatrix} \, ,
\end{equation}
where
\begin{equation*}
a_1 \, := \, \dfrac{z^2 \, (z^2-1)}{4} +2 \, z \, \sigma +2 \, \tau \, ,\quad a_2 \, := \, -\dfrac{z \, \sigma}{4} +\dfrac{\sigma^2}{2} -\dfrac{z^2 \, \tau}{2} \, ,\quad 
 a_3 \, := \, -\dfrac{\sigma^2}{4} +\tau^2 \, .
\end{equation*}
In particular, $A(z)$ is a contraction on $\ell^2(\mathbb{Z})$ in each of the three following cases:
\begin{itemize}
 \item $\sigma=0$, $\tau=-(1-z^2)/12$ and $z \in [-1,1]$,
 \item $\sigma=z \, (1-z^2)/6$, $\tau=-\sigma/2$ and $z \in [0,1]$,
 \item $\sigma=z \, (1-z^2)/6$, $\tau=-z^2 \, (1-z^2)/24$ and $z \in [-1,1]$.
\end{itemize}
\end{lemma}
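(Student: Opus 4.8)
The plan is to follow verbatim the strategy used to prove Lemma~\ref{lem1}. Writing $\Psi_j-\Phi_j=-z\,D_0\Phi_j+\tfrac{z^2}{2}\,\Delta\Phi_j+\sigma\,D_0\Delta\Phi_j+\tau\,\Delta^2\Phi_j$ and using $\Psi_j^2-\Phi_j^2=2\,\Phi_j\,(\Psi_j-\Phi_j)+(\Psi_j-\Phi_j)^2$, one obtains a fixed bilinear expression in $\Phi$ made of ten quadratic monomials. To keep the bookkeeping manageable I would first peel off the Lax--Wendroff block $I-z\,D_0+\tfrac{z^2}{2}\Delta$, which is exactly the operator of Lemma~\ref{lem1} for the parameter $\nu=z^2$; its contribution to the decomposition \eqref{eq:quad_tele} is already known, and in particular, since the three-point coefficient $a_1$ of Lemma~\ref{lem1} vanishes when $\nu=z^2$, this block contributes no $(D\Phi)^2$ term, only the Lax--Wendroff dissipation $\tfrac{z^2(z^2-1)}{4}\,(\Delta\Phi_j)^2$. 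It then remains to decompose the cross terms $2\,\Psi^{\mathrm{LW}}_j\,R_j+R_j^2$, where $\Psi^{\mathrm{LW}}_j:=\Phi_j-z\,D_0\Phi_j+\tfrac{z^2}{2}\Delta\Phi_j$ and $R_j:=\sigma\,D_0\Delta\Phi_j+\tau\,\Delta^2\Phi_j$.

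For these remaining monomials I would use integration-by-parts identities of the same nature as \eqref{eq:tele_01}--\eqref{eq:tele_04}, most of them obtained simply by applying those identities to the shifted sequence $(\Delta\Phi_k)_k$ instead of $(\Phi_k)_k$ (since $D_0\Delta$ and $\Delta^2$ act on $\Phi$ as $D_0$ and $\Delta$ act on $\Delta\Phi$). The organising principle, checked once and for all on $\ell^2(\Z)$, is that $D_0$ is skew-adjoint and $\Delta$ self-adjoint: a monomial $(X\Phi_j)(Y\Phi_j)$ is purely telescopic when $X^{*}Y$ is skew-adjoint, and splits into a telescopic part plus a genuine sum of squares in second differences when $X^{*}Y$ is self-adjoint. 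Concretely, $\Phi_j\,D_0\Delta\Phi_j$, $D_0\Phi_j\,\Delta^2\Phi_j$, $\Delta\Phi_j\,D_0\Delta\Phi_j$ and $D_0\Delta\Phi_j\,\Delta^2\Phi_j$ are purely telescopic and feed the $4\times4$ matrix $Q$ of \eqref{eq:tele_fiveepoint}, whereas $\Phi_j\,\Delta^2\Phi_j$, $D_0\Phi_j\,D_0\Delta\Phi_j$, $\Delta\Phi_j\,\Delta^2\Phi_j$, $(D_0\Delta\Phi_j)^2$ and $(\Delta^2\Phi_j)^2$ each produce, after re-centering the resulting squares on three consecutive nodes, a telescopic remainder plus a quadratic form in $(\Delta\Phi_{j-1},\Delta\Phi_j,\Delta\Phi_{j+1})$. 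Collecting the two kinds of contributions yields the $T_j$ of \eqref{eq:tele_fiveepoint} and the $S_j$ of \eqref{eq:symm_fiveepoint} with the stated $a_1,a_2,a_3$. I expect this to be the only laborious step; it is, however, entirely mechanical and amounts to an explicit and verifiable identity between polynomials in the shift operator.

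Once the decomposition is established, $A(z)$ is a contraction as soon as $S_j\le 0$ for every $j$, that is as soon as the symmetric matrix $M$ of \eqref{eq:symm_fiveepoint} is negative semi-definite. Here the key remark is that the three $3\times3$ matrices entering $M$ are simultaneously diagonalisable: the identity is trivially so, the second is the graph Laplacian of the three-vertex path (the discrete Laplacian with Neumann conditions), and the third is the rank-one matrix $(1,-2,1)^{T}(1,-2,1)$, whose range is spanned by the top eigenvector of the second. A common orthogonal eigenbasis is therefore $(1,1,1)^{T}$, $(1,0,-1)^{T}$, $(1,-2,1)^{T}$, on which the path Laplacian has eigenvalues $0,1,3$ and the rank-one matrix has eigenvalues $0,0,6$. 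Consequently $M$ has eigenvalues $\tfrac{a_1}{3}$, $\tfrac{a_1}{3}+a_2$, $\tfrac{a_1}{3}+3\,a_2+6\,a_3$, so that
\[
M\preceq 0 \quad\Longleftrightarrow\quad a_1\le 0 \, ,\qquad \frac{a_1}{3}+a_2\le 0 \, ,\qquad \frac{a_1}{3}+3\,a_2+6\,a_3\le 0 \, .
\]

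It then only remains, for each of the three prescribed pairs $(\sigma,\tau)$, to substitute into $a_1,a_2,a_3$ and check these three scalar inequalities on the indicated range of $z$. In every case the three quantities factor as $(1-z^2)$ (for the first and third families) or $z\,(1-z^2)$ (for the second) times a cofactor of constant sign on that range; for instance, in the first family one finds $a_1=-(1-z^2)(3z^2+2)/12$, $\tfrac{a_1}{3}+a_2=-(1-z^2)(3z^2+4)/72$ and $\tfrac{a_1}{3}+3a_2+6a_3=-(1-z^2)/72$, all nonpositive for $z\in[-1,1]$, and the other two families are handled the same way. This closes the proof, the only genuine obstacle being the algebraic verification of the decomposition in the first step.
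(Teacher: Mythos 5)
Your proposal is correct and follows essentially the same route as the paper: the same expansion of $\Psi_j^2-\Phi_j^2$, the same reduction of all non-telescopic contributions to a quadratic form in $(\Delta\Phi_{j-1},\Delta\Phi_j,\Delta\Phi_{j+1})$ (with the telescopic correction for the $(\Delta\Phi_j)^2$ term), the same common eigenbasis $(1,1,1)^T$, $(1,0,-1)^T$, $(1,-2,1)^T$ yielding the eigenvalues $a_1/3$, $a_1/3+a_2$, $a_1/3+3a_2+6a_3$, and the same sign check for the three families. The only cosmetic difference is that you predict which monomials are purely telescopic via the skew-/self-adjointness of the associated operator, where the paper instead writes out the four explicit identities \eqref{eq:tele_11}--\eqref{eq:tele_14} in an appendix.
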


\begin{proof}
Using \eqref{defAz5pts} in (\ref{eq:quad_tele}), we now have,
\begin{align}
\nonumber
\Psi_j^2 - \Phi_j^2 \, = \, &-2 \, z \, \Phi_j \, D_0 \, \Phi_j +z^2 \, (D_0 \, \Phi_j)^2 +z^2 \Phi_j \, \Delta \, \Phi_j -z^3 \, (D_0 \, \Phi_j) \, (\Delta \, \Phi_j) 
+\dfrac{z^4}{4} \, (\Delta \, \Phi_j)^2 
\\
\nonumber
& +2 \, \sigma \, \Phi_j \, D_0 \, \Delta \, \Phi_j - 2 \, z \, \sigma \, (D_0 \Phi_j) \, (D_0 \, \Delta \, \Phi_j) +z^2 \, \sigma \, (\Delta \, \Phi_j) \, (D_0 \, \Delta \, \Phi_j) 
+\sigma^2 \, (D_0 \, \Delta \, \Phi_j)^2 
\\
\nonumber
& +2 \, \tau \, \Phi_j \, (\Delta^2 \, \Phi_j) -2 \, z \, \tau \, (D_0 \, \Phi_j) \, (\Delta ^2 \, \Phi_j) +z^2 \, \tau \, (\Delta \, \Phi_j) \, (\Delta^2 \, \Phi_j) 
+2 \, \sigma \, \tau \, (D_0 \, \Delta \, \Phi_j) \, (\Delta ^2 \, \Phi_j) 
\\
\label{eq:int_en_fivepoint}
& +\tau^2 \, (\Delta ^2 \, \Phi_j)^2 \, .
\end{align}
The terms in the first line on the right hand side above are covered by the previous formulas (\ref{eq:tele_01}), (\ref{eq:tele_02}), (\ref{eq:tele_03}) 
and (\ref{eq:tele_04}). Further, by substituting $\Phi_j$ for $\Delta \, \Phi_j$ in the same formulas, we get,
\begin{equation}
\label{eq:tele_01_fivepoint}
(D_0 \, \Delta \, \Phi_j)^2 \, = \, \dfrac{1}{4} \, \left[ 2 \, (D \, \Delta \, \Phi_j)^2 +2 \, (D \, \Delta \, \Phi_{j+1})^2 -(\Delta^2 \, \Phi_j)^2 \right] \, ,
\end{equation}
as well as
\begin{align}
\label{eq:tele_02_fivepoint}
2 \, (\Delta \, \Phi_j) \, (D_0 \, \Delta \, \Phi_j) & \, = \, \left[ (\Delta \, \Phi_j)^2 +\Delta \, \Phi_j \, D \, \Delta \, \Phi_{j+1} \right] 
-\left[ (\Delta \, \Phi_{j-1})^2 +\Delta \, \Phi_{j-1} \, D \, \Delta \, \Phi_j \right] \, ,\\
\label{eq:tele_03_fivepoint}
2 \, (D_0 \, \Delta \, \Phi_j) \, (\Delta^2 \, \Phi_j) & \, = \, (D \, \Delta \, \Phi_{j+1})^2 -(D \, \Delta \, \Phi_j)^2 \, ,
\end{align}
and 
\begin{equation}
\label{eq:tele_04_fivepoint}
\begin{aligned}
2 \, \Delta \, \Phi_j \, \Delta^2 \, \Phi_j \, = \, & -(D \, \Delta \, \Phi_j)^2 -(D \, \Delta \, \Phi_{j+1})^2 \\
& +\left[ 2 \, \Delta \, \Phi_j \, D \, \Delta \, \Phi_{j+1} +(D \, \Delta \, \Phi_{j+1})^2 \right] -\left[ 2 \, \Delta \, \Phi_{j-1} \, D \, \Delta \, \Phi_j +(D \, \Delta \, \Phi_j)^2 \right] \, .
 \end{aligned}
\end{equation}
For the remaining terms on the right hand side of \eqref{eq:int_en_fivepoint}, we need to introduce the following four new high order formulas, the proof 
of which can be found in Appendix,
\begin{multline}\label{eq:tele_11}
2 \, D_0 \, \Phi_j \, D_0 \, \Delta \, \Phi_j \, = \, -2 \, (\Delta \, \Phi_j)^2 +\dfrac{1}{4} \, \left[ (D \, \Delta \, \Phi_j)^2 +(D \, \Delta \, \Phi_{j+1})^2 \right] \\
+\left[ (D \, \Phi_{j+1} -\dfrac{1}{4} \, D \, \Delta \, \Phi_{j+1}) \, (D \, \Delta \, \Phi_{j+1} +2 \, \Delta \, \Phi_j) \right] 
-\left[ (D \, \Phi_j -\dfrac{1}{4} \, D \, \Delta \, \Phi_j) \, (D \, \Delta \, \Phi_j +2 \, \Delta \, \Phi_{j-1}) \right] \, ,
\end{multline}
as well as,
\begin{align}
\label{eq:tele_12}
2 \, \Phi_j \, D_0 \, \Delta \, \Phi_j = & \, \left[ \Phi_j \, (D \, \Delta \, \Phi_{j+1} +2 \, \Delta \, \Phi_j) +(\Delta \, \Phi_j -D \, \Phi_{j+1}) \, D \, \Phi_{j+1} \right] \\
\nonumber
& \, - \left[ \Phi_{j-1} \, (D \, \Delta \, \Phi_j +2 \, \Delta \, \Phi_{j-1}) +(\Delta \, \Phi_{j-1} -D \, \Phi_j) \, D \, \Phi_j \right] \, ,\\
\label{eq:tele_13}
2 \, D_0 \, \Phi_j \, \Delta^2 \, \Phi_j = & \, \left[ -(D \, \Delta \, \Phi_{j+1} +\Delta \, \Phi_j) \, \Delta \, \Phi_j +2 \, D \, \Phi_{j+1} \, D \, \Delta \, \Phi_{j+1} \right] \\
\nonumber
 & - \left[ -(D \, \Delta \, \Phi_j +\Delta \, \Phi_{j-1}) \, \Delta \, \Phi_{j-1} +2 \, D \, \Phi_j \, D \, \Delta \, \Phi_j \right] \, ,
\end{align}
and 
\begin{equation}
\label{eq:tele_14}
2 \, \Phi_j \, \Delta^2 \, \Phi_j \, = \, 2 \, (\Delta \, \Phi_j)^2 + \left[ 2 \, \Phi_j \, D \, \Delta \, \Phi_{j+1} -2 \, D \, \Phi_{j+1} \, \Delta \, \Phi_j \right] 
-\left[ 2 \, \Phi_{j-1} \, D \, \Delta \, \Phi_j -2 \, D \, \Phi_j \, \Delta \, \Phi_{j-1} \right] \, .
\end{equation}

Note that all the non-telescopic parts above are expressed in terms of the three variables $(\Delta \, \Phi_j)^2$, $(\Delta^2 \, \Phi_j)^2$ and $(D \, \Delta \, \Phi_j)^2 
+(D \, \Delta \, \Phi_{j+1})^2$. The last two of these can be written as,
\begin{equation*}
(\Delta^2 \, \Phi_j)^2 \, = \, \begin{pmatrix}
\Delta \, \Phi_{j-1} \\ \Delta \, \Phi_j \\ \Delta \, \Phi_{j+1}
\end{pmatrix}^T \, \begin{pmatrix}
1 \\ -2 \\ 1 \end{pmatrix} \, \begin{pmatrix}
1 & -2 & 1 \end{pmatrix} \, \begin{pmatrix}
\Delta \, \Phi_{j-1} \\ \Delta \, \Phi_j \\ \Delta \, \Phi_{j+1}
\end{pmatrix} \, = \, \begin{pmatrix}
\Delta \, \Phi_{j-1} \\ \Delta \, \Phi_j \\ \Delta \, \Phi_{j+1} 
\end{pmatrix}^T \, \begin{pmatrix}
1 & -2 & 1 \\
-2 & 4 & -2 \\
1 & -2 & 1 
\end{pmatrix} \, \begin{pmatrix}
\Delta \, \Phi_{j-1} \\ \Delta \, \Phi_j \\ \Delta \, \Phi_{j+1}
\end{pmatrix} \, ,
\end{equation*}
and
\begin{equation*}
(D \, \Delta \, \Phi_j)^2 +(D \, \Delta \, \Phi_{j+1})^2 \, = \, \begin{pmatrix}
\Delta \, \Phi_{j-1} \\ \Delta \, \Phi_j \\ \Delta \, \Phi_{j+1}
\end{pmatrix}^T \, \begin{pmatrix}
1 & -1 & 0 \\
-1 & 2 & -1 \\
0 & -1 & 1 
\end{pmatrix} \, \begin{pmatrix}
\Delta \, \Phi_{j-1} \\ \Delta \, \Phi_j \\ \Delta \, \Phi_{j+1}
\end{pmatrix} \, .
\end{equation*}
Thus, in order to analyze the sign of the complete non-telescopic part, it is convenient to expand $(\Delta \, \Phi_j)^2$ into a similar matrix form with a telescopic correction,
\begin{multline*}
(\Delta \, \Phi_j)^2 \, = \, \dfrac{1}{3} \, \begin{pmatrix}
\Delta \, \Phi_{j-1} \\ \Delta \, \Phi_j \\ \Delta \, \Phi_{j+1} \end{pmatrix}^T \, \begin{pmatrix}
1 & 0 & 0 \\
0 & 1 & 0 \\
0 & 0 & 1 
\end{pmatrix} \, \begin{pmatrix}
\Delta \, \Phi_{j-1} \\ \Delta \, \Phi_j \\ \Delta \, \Phi_{j+1} \end{pmatrix} \\
-\dfrac{1}{3} \, \left[ (D\Delta \, \Phi_j+\Delta \, \Phi_j)^2 +(\Delta \, \Phi_j)^2 \right] 
+\dfrac{1}{3} \, \left[ (D\Delta \, \Phi_{j-1}+\Delta \, \Phi_{j-1})^2+(\Delta \, \Phi_{j-1})^2 \right] \, .
\end{multline*}
Inserting (\ref{eq:tele_01_fivepoint}) through (\ref{eq:tele_14}) into (\ref{eq:int_en_fivepoint}) and then rewriting $(\Delta \, \Phi_j)^2$, $(\Delta^2 \, \Phi_j)^2$ and 
$(D \, \Delta \, \Phi_j)^2 +(D \, \Delta \, \Phi_{j+1})^2$ according to the above, the first part of the Lemma follows. Note that the term $a_1 \, (\Delta \, \Phi_j)^2$ has 
been split between between the telescopic and dissipative parts.

The same orthogonal set of eigenvectors to each of the three terms in (\ref{eq:symm_fiveepoint}) is given by $(1,1,1)^T$, $(-1,0,1)^T$ and $(1,-2,1)^T$, with the 
associated eigenvalues of $M$,
\begin{equation*}
\lambda_1 \, := \, \dfrac{a_1}{3} \, ,\quad \lambda_2 \, := \, \dfrac{a_1}{3} +a_2 \, ,\quad \lambda_3 \, := \, \dfrac{a_1}{3} +3 \, a_2 +6 \, a_3 \, . 
\end{equation*}
The operator $A(z)$ is a contraction on $\ell^2(\mathbb{Z})$ if all three eigenvalues $\lambda_{1,2,3}$ of $M$ are non-positive. In contrast to the three 
point stencil case, we can not easily rewrite the conditions for contractivity into explicit relations for $\sigma$ and $\tau$. However, three special cases 
seem to be worth to mention. First, if we let $\sigma=0$ and $\tau = -(1-z^2)/12$, then the eigenvalues of $M$ simplify into,
\begin{equation*}
\lambda_1 \, = \, -\dfrac{(1-z^2) \, (2 +3 \, z^2)}{36} \, ,\quad \lambda_2 \, = \, -\dfrac{(1-z^2) \, (4 +3 \, z^2)}{72} \, ,\quad 
\lambda_3 \, = \, -\dfrac{1-z^2}{72} \, .
\end{equation*}
Hence $A(z)$ is a contraction for any $z \in [-1,1]$ in that case. Recall that for the second order Lax-Wendroff scheme, the dissipation approaches 
$0$ as $z$ approaches $0$, which is not the case here. In terms of dissipation properties, this new scheme can instead be seen in some sense to 
be analogous to the three-point Lax-Friedrichs scheme (though one eigenvalue for the Lax-Friedrichs scheme is zero, which is not the case here).

A skewed $4-$point stencil of accuracy order $3$, which can be seen as analogous to the two-point upwind scheme, is given by $\sigma := z \, (1-z^2)/6$ 
and $\tau := -\sigma/2=-z(1-z^2)/12$, yielding,
\begin{align*}
\lambda_1 \, = \, -\dfrac{z \, (1-z^2) \, (2-z)}{36} \, ,\quad \lambda_2 \, =& \, -\dfrac{z \, (1-z^2) \, (1+z) \, (2-z)^2}{72} \, , \\
\lambda_3 \, =& \, -\dfrac{z \, (1-z^2) \, (2-z) \, (2 +3 \, z -3 \, z^2)}{72} \, .
\end{align*}
Again, $A(z)$ is a contraction on $\ell^2(\mathbb{Z})$ for any $z \in [0,1]$. Finally, the so-called Strang scheme of fourth order accuracy \cite{strang} 
is given by $\sigma := z \, (1-z^2)/6$ and $\tau := -z^2 \, (1-z^2)/24$, with the corresponding eigenvalues,
\begin{equation*}
\lambda_1 \, = \, 0 \, ,\quad \lambda_2 \, = \, -\dfrac{z^2 \, (1-z^2) \, (4-z^2)}{144} \, ,\quad 
\lambda_3 \, = \, -\dfrac{z^2 \, (1-z^2) \, (3-z^2) \, (4-z^2)}{96} \, .
\end{equation*}
The operator $A(z)$ is again a contraction on $\ell^2(\mathbb{Z})$ for any $z \in [-1,1]$. This scheme is a high (i.e., fourth) order analogue to the second 
order accurate Lax-Wendroff scheme.
\end{proof}
\noindent 
We note that the corresponding stability analysis for the fourth order Strang scheme in \cite{strang} is performed by means of the Fourier transform and 
is therefore not applicable for the outflow problem, which will be considered next.

\subsection{Outflow boundary conditions}

We examine in this paragraph two sets of outflow boundary conditions based on second order accurate extrapolation, and thus leave higher order boundary 
conditions as a topic for future work. As opposed to the three point stencil case, we have not found a single set of second order boundary conditions which is 
stable for all of the three example schemes listed in Lemma \ref{lem2}, which is why we consider two different alternatives below.

In what follows, we shall use the inner product \eqref{eq:inner_prod} with $r=1$ and $h_0=1/2$. In (\ref{eq:semi_b_stab}), we thus have,
\begin{equation}\label{eq:E5pts_0}
E \, = \, \dfrac{1}{2} \, 
\begin{pmatrix}
\Phi_{-1} \\ D \, \Phi_0 \\ \Delta \, \Phi_{-1} \\ D \, \Delta \, \Phi_0 \end{pmatrix}^T \, Q \, \begin{pmatrix}
\Phi_{-1} \\ D \, \Phi_0 \\ \Delta \, \Phi_{-1} \\ D \, \Delta \, \Phi_0 \end{pmatrix} 
+\dfrac{1}{2} \, \begin{pmatrix}
\Phi_0 \\ D \, \Phi_1 \\ \Delta \, \Phi_0 \\ D \, \Delta \, \Phi_1 \end{pmatrix}^T \, Q \, \begin{pmatrix}
\Phi_0 \\ D \, \Phi_1 \\ \Delta \, \Phi_0 \\ D \, \Delta \, \Phi_1 \end{pmatrix} 
+\dfrac{1}{2} \, \begin{pmatrix}
\Delta \, \Phi_{-1} \\ \Delta \, \Phi_0 \\ \Delta \, \Phi_1 \end{pmatrix}^T \, M \, \begin{pmatrix}
\Delta \, \Phi_{-1} \\ \Delta \, \Phi_0 \\ \Delta \, \Phi_1 \end{pmatrix} \, ,
\end{equation}
with $Q$ given in \eqref{eq:tele_fiveepoint} and $M$ given in \eqref{eq:symm_fiveepoint}.

\subsubsection{Second order extrapolation of type 1}

We can prove
\begin{proposition}
\label{prop5pts-1}
Let $r=1$, $h_0=1/2$, and consider the set of second order extrapolation conditions $\Delta \, \Phi_0 = D_0 \, \Delta \, \Phi_0 =0$. In (\ref{eq:E5pts_0}), we then have,
\begin{equation}\label{eq:E5pts_1}
E \, = \, \begin{pmatrix}
\Phi_0 \\ D \, \Phi_0 \\ D^2 \, \Phi_0 \end{pmatrix}^T \, B \, \begin{pmatrix}
\Phi_0 \\ D \, \Phi_0 \\ D^2 \, \Phi_0 \end{pmatrix} \, , \, B \, := \, \begin{pmatrix}
-z & \dfrac{z^2}{2}& \dfrac{\sigma}{2} \\
\dfrac{z^2}{2} & -\dfrac{z^3}{2}-\sigma &  \dfrac{-z\sigma}{2} \\
\dfrac{\sigma}{2} &  \dfrac{-z\sigma}{2} & \dfrac{2 \, \tau}{3} +\sigma \, \tau +\dfrac{5 \, z \, \sigma}{12} -\dfrac{z^2 \, \tau}{2} +2 \, \tau^2 -\dfrac{z^2 \, (1-z^2)}{12} 
\end{pmatrix} \, .
\end{equation}
In particular, the scheme is semi-bounded (i.e. $B$ is negative semi-definite) for all $z \in (0,1)$ at least in the following two special cases,
\begin{itemize}
 \item $\sigma=0$ and $\tau=-(1-z^2)/12$,
 \item $\sigma=z \, (1-z^2)/6$ and $\tau=-\sigma/2$.
\end{itemize}
\end{proposition}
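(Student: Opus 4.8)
The plan is to feed the two extrapolation conditions into the boundary functional (\ref{eq:E5pts_0}), collapse it to a quadratic form in the three quantities $\Phi_0$, $D \, \Phi_0$, $D^2 \, \Phi_0$, and then analyze the sign of the resulting $3\times 3$ matrix $B$.

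First I would translate $\Delta \, \Phi_0 = 0$ and $D_0 \, \Delta \, \Phi_0 = 0$ into relations between the grid entries near the boundary. Using $\Delta \, \Phi_j = D \, \Phi_{j+1} - D \, \Phi_j = D^2 \, \Phi_{j+1}$ and $D_0 \, \Delta \, \Phi_j = \tfrac{1}{2}(\Delta \, \Phi_{j+1} - \Delta \, \Phi_{j-1})$, the two conditions are equivalent to
\begin{gather*}
\Phi_{-1} = \Phi_0 - D \, \Phi_0, \qquad D \, \Phi_1 = D \, \Phi_0, \qquad \Delta \, \Phi_0 = 0, \\
\Delta \, \Phi_{-1} = \Delta \, \Phi_1 = D^2 \, \Phi_0, \qquad D \, \Delta \, \Phi_0 = - D^2 \, \Phi_0, \qquad D \, \Delta \, \Phi_1 = D^2 \, \Phi_0 .
\end{gather*}
Note that the second condition is precisely what allows $\Delta \, \Phi_1$ (hence the ghost value $\Phi_2$) to be re-expressed through interior points. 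Substituting these relations, each of the three vectors in (\ref{eq:E5pts_0}) becomes a linear combination of $\Phi_0, D \, \Phi_0, D^2 \, \Phi_0$ alone; expanding the quadratic forms defined by $Q$ from (\ref{eq:tele_fiveepoint}) and $M$ from (\ref{eq:symm_fiveepoint}) (with $h_0 = 1/2$) and collecting coefficients then yields the matrix $B$ of (\ref{eq:E5pts_1}). This is a mechanical if lengthy computation — in particular the $\sigma \, \tau$ and $\tau^2$ contributions to $B_{33}$ come partly from the $Q$-block acting on the $\pm D^2 \, \Phi_0$ components of the first two vectors and partly from the $M$-block acting on the third — and I do not anticipate difficulty there.

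It then remains to show that $B$ in (\ref{eq:E5pts_1}) is negative semi-definite for $z \in (0,1)$ in the two listed cases. Since $B_{11} = -z < 0$ on $(0,1)$, I would complete the square in $\Phi_0$, equivalently form the Schur complement of $B$ with respect to its $(1,1)$ entry, writing
\[
E = -z \, \Big( \Phi_0 - \tfrac{z}{2} \, D \, \Phi_0 - \tfrac{\sigma}{2z} \, D^2 \, \Phi_0 \Big)^{2} + \begin{pmatrix} D \, \Phi_0 \\ D^2 \, \Phi_0 \end{pmatrix}^{T} \widetilde B \begin{pmatrix} D \, \Phi_0 \\ D^2 \, \Phi_0 \end{pmatrix} ,
\]
so the claim reduces to showing that the $2\times 2$ symmetric matrix $\widetilde B$ has nonpositive trace and nonnegative determinant on $(0,1)$. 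In the first case, $\sigma = 0$ and $\tau = -(1-z^2)/12$, the matrix $B$ is in fact block diagonal: the $(\Phi_0, D \, \Phi_0)$ block has determinant $z^4/4 > 0$ and negative trace, while the remaining diagonal entry simplifies to $-(1-z^2)(3+4z^2)/72 \le 0$, so $B$ is actually negative definite on $(0,1)$.

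The real work lies in the second case, $\sigma = z(1-z^2)/6$ and $\tau = -\sigma/2 = -z(1-z^2)/12$, where $\sigma \neq 0$ and one genuinely needs the Schur complement: after substituting these values one must simplify $\mathrm{tr} \, \widetilde B$ and $\det \widetilde B$ and exhibit them as products of factors whose sign is transparent for $z \in (0,1)$ — one expects an overall factor such as $z \, (1-z^2)$ to come out, leaving residual polynomials in $z^2$ to be checked on the unit interval. Producing these factorizations, and in particular confirming that $\det \widetilde B \ge 0$ throughout $(0,1)$, is the crux of the argument; the rest is bookkeeping, and semi-boundedness then follows from Definition \ref{def:semi_b_stab} combined with the contractivity of the interior scheme established in Lemma \ref{lem2}.
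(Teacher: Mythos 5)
Your proposal follows essentially the same route as the paper: translate the two extrapolation conditions into the relations $D\,\Phi_1=D\,\Phi_0$, $\Delta\,\Phi_1=\Delta\,\Phi_{-1}=D^2\,\Phi_0$, $D\,\Delta\,\Phi_0=-D^2\,\Phi_0$, $D\,\Delta\,\Phi_1=D^2\,\Phi_0$, substitute into \eqref{eq:E5pts_0} to obtain the $3\times 3$ matrix $B$, and then check negative semi-definiteness case by case; your treatment of the first case ($\sigma=0$, block-diagonal $B$ with $2\times2$ determinant $z^4/4$ and corner entry $-(1-z^2)(3+4z^2)/72$) matches the paper exactly. The only point to flag is that for the second case you describe the Schur-complement strategy but stop short of the computation you yourself call the crux. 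For the record, it does go through: with $\sigma=z(1-z^2)/6$, $\tau=-\sigma/2$ one finds $B_{22}=-z(1+2z^2)/6$, $B_{33}=-z(1-z^2)(1+z)(4-3z)/72$, and the Schur complement of $B_{11}=-z$ is
\begin{equation*}
\widetilde B \, = \, \begin{pmatrix} -\dfrac{z\,(2+z^2)}{12} & -\dfrac{z^2\,(1-z^2)}{24} \\[1ex] -\dfrac{z^2\,(1-z^2)}{24} & -\dfrac{z\,(1-z^2)\,(1+z)\,(7-5z)}{144} \end{pmatrix} \, ,
\end{equation*}
whose trace is negative and whose determinant equals $\dfrac{z^2\,(1-z^2)}{864}\,\bigl(z^3(1-z)-3z^2+2z+7\bigr)>0$ on $(0,1)$, so $B$ is in fact negative definite there. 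The paper reaches the same conclusion by exhibiting an explicit sum-of-squares decomposition of $y^T B\, y$ (successive completion of squares), which is arithmetically equivalent to your Schur-complement test but has the advantage of displaying the certificate directly; either way, once this verification is written out your argument is complete.
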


\begin{proof}
The two extrapolation conditions readily yield,
\begin{equation*}
 D \, \Phi_1 \, =  \, D \, \Phi_0 \quad \quad \Delta \, \Phi_1\, =  \, \Delta \, \Phi_{-1} \, .
\end{equation*}
By combining the two we also get $D \, \Delta \, \Phi_1 = \Delta \, \Phi_{-1}$, and by definition we also have $\Phi_{-1} = \Phi_0 -D \, \Phi_0$ 
as well as $\Delta \, \Phi_{-1} =D^2 \, \Phi_0$. With these formulas, we can now simplify $E$ in (\ref{eq:E5pts_0}) into,
\begin{equation*}
E \, = \, \dfrac{1}{2} \, \begin{pmatrix}
\Phi_0 -D \, \Phi_0 \\ D \, \Phi_0 \\ D^2 \, \Phi_0 \\ -D^2 \, \Phi_0 \end{pmatrix}^T \, Q \, \begin{pmatrix}
\Phi_0 -D \, \Phi_0 \\ D \, \Phi_0 \\ D^2 \, \Phi_0 \\ -D^2 \, \Phi_0 \end{pmatrix}
+\dfrac{1}{2} \, \begin{pmatrix}
\Phi_0 \\ D \, \Phi_0 \\ 0 \\ D^2 \, \Phi_0 \end{pmatrix}^T \, Q \, \begin{pmatrix}
\Phi_0 \\ D \, \Phi_0 \\ 0 \\ D^2 \, \Phi_0 \end{pmatrix}
+\dfrac{1}{2} \, \begin{pmatrix}
D^2 \, \Phi_0 \\ 0 \\ D^2 \, \Phi_0 \end{pmatrix}^T \, M \, \begin{pmatrix}
D^2 \, \Phi_0 \\ 0 \\ D^2 \, \Phi_0 \end{pmatrix} \, ,
\end{equation*}
which in turn, after some straightforward algebra, leads to (\ref{eq:E5pts_1}).

For the semi-boundedness part of the Proposition, let us first consider the case $\sigma=0$ and $\tau=-(1-z^2)/12$. The general expression for the matrix $B$ 
in (\ref{eq:E5pts_1}) reduces to,
$$
B \, := \, \begin{pmatrix}
-z & z^2/2 & 0 \\
z^2/2 & -\dfrac{z^3}{2} & 0 \\
0 & 0 & -\dfrac{(1-z^2) \, (3+4\, z^2)}{72} \end{pmatrix} \, .
$$
It is a simple exercise to verify that the upper left $2 \times 2$ block is negative definite for $z \in (0,1)$, hence $B$ is negative definite for $z \in (0,1)$. We 
get in that case the conclusion of Proposition \ref{prop5pts-1} as an immediate consequence. We now focus on the case $\sigma=z \, (1-z^2)/6$ and $\tau 
=-\sigma/2$, for which the matrix $B$ reads:
\begin{equation*}
B \, = \, -z \,  \begin{pmatrix}
1 & -z/2 & -(1-z^2)/12 \\
-z/2 & \dfrac{1+2\, z^2}{6} & z \, (1-z^2)/12 \\
-(1-z^2)/12 & z \, (1-z^2)/12 & \dfrac{(1-z^2) \, (1+z) \, (4 -3\, z)}{72} \end{pmatrix} \, .
\end{equation*}
We then compute:
\begin{multline*}
\begin{pmatrix}
U \\ V \\ W \end{pmatrix} ^T B \, \begin{pmatrix}
U \\ V \\ W \end{pmatrix} \\
= \, -z \left( \left( U -\dfrac{z}{2} \, V -\dfrac{1-z^2}{12} \, W \right)^2 +\dfrac{z^2}{4} \, V^2 
+\dfrac{(1-z^2)}{6} \, \left( \left( V+\dfrac{z}{4} \, W \right)^2 +\dfrac{14 +4\, z-8\, z^2}{48} \, W^2 \right) \right) \, , 
\end{multline*}
thereby showing that $B$ is negative semi-definite for $z \in (0,1)$. This completes the proof of Proposition \ref{prop5pts-1}.
\end{proof}

\subsubsection{Outflow boundary: second order extrapolation of type 2}

We examine in this paragraph a second set of outflow boundary conditions with second order accuracy.

\begin{proposition}
\label{prop5pts-2}
Let $r=1$, $h_0=1/2$, and consider the set of second order extrapolation conditions $\Delta \, \Phi_0 = \Delta^2 \, \Phi_0 =0$. Then we have
\begin{equation}\label{eq:E5pts_2}
E \, = \, \begin{pmatrix}
\Phi_0 \\ D \, \Phi_0 \\ D^2 \, \Phi_0 \end{pmatrix}^T \, B \, \begin{pmatrix}
\Phi_0 \\ D \, \Phi_0 \\ D^2 \, \Phi_0 \end{pmatrix} \, ,\quad B \, := \, \begin{pmatrix}
-z  & z^2/2 & -\tau \\
z^2/2 & -\frac{z^3}{2}-\sigma & z \, \tau \\
-\tau & z \, \tau & \frac{2 \, \tau}{3} +\sigma \, \tau+\frac{5 \, z \, \sigma}{12} -\frac{z^2 \, \tau}{2} +\frac{\sigma^2}{2} -\frac{z^2 \, (1-z^2)}{12}\end{pmatrix} .
\end{equation}
In particular, the scheme is semi-bounded (i.e. $B$ is negative semi-definite) for all $z \in (0,1)$ at least in the following two special cases,
\begin{itemize}
 \item $\sigma=z \, (1-z^2)/6$ and $\tau=-\sigma/2$,
 \item $\sigma=z \, (1-z^2)/6$ and $\tau=-z^2 \, (1-z^2)/24$.
\end{itemize}
\end{proposition}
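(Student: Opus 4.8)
The plan is to follow the two-stage pattern of the proof of Proposition~\ref{prop5pts-1}. One first rewrites the two extrapolation conditions as linear identities among the boundary quantities occurring in \eqref{eq:E5pts_0}, substitutes them there, and expands to reach the matrix $B$ of \eqref{eq:E5pts_2}; then, for each of the two listed parameter pairs, one exhibits an explicit sum-of-squares decomposition of the quadratic form in \eqref{eq:E5pts_2} that displays its sign. Throughout I abbreviate $(U,V,W):=(\Phi_0,\,D\,\Phi_0,\,D^2\,\Phi_0)$.

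For the first stage, $\Delta\,\Phi_0=0$ yields $D\,\Phi_1=D\,\Phi_0$, and since $\Delta^2\,\Phi_0=\Delta\,\Phi_{-1}-2\,\Delta\,\Phi_0+\Delta\,\Phi_1$, the second condition together with $\Delta\,\Phi_0=0$ gives $\Delta\,\Phi_1=-\Delta\,\Phi_{-1}$, whence $D\,\Delta\,\Phi_1=-\Delta\,\Phi_{-1}$ and $D\,\Delta\,\Phi_0=-\Delta\,\Phi_{-1}$. Combined with the definitions $\Phi_{-1}=\Phi_0-D\,\Phi_0$ and $\Delta\,\Phi_{-1}=D^2\,\Phi_0$, all three vectors appearing in \eqref{eq:E5pts_0} become linear in $(U,V,W)$; relative to the computation in the proof of Proposition~\ref{prop5pts-1}, only the slots carrying $\Delta\,\Phi_1$ and $D\,\Delta\,\Phi_1$ change sign. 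Inserting these vectors into \eqref{eq:E5pts_0} with $Q$ from \eqref{eq:tele_fiveepoint} and $M$ from \eqref{eq:symm_fiveepoint}, and collecting terms, produces \eqref{eq:E5pts_2}. This step is lengthy but entirely mechanical.

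For the second stage, the case $\sigma=z\,(1-z^2)/6$, $\tau=-\sigma/2$ requires no new computation: when $\tau=-\sigma/2$ one has $-\tau=\sigma/2$, $z\,\tau=-z\,\sigma/2$ and $\sigma^2/2=2\,\tau^2$, so $B$ in \eqref{eq:E5pts_2} is entrywise equal to the matrix $B$ of the second case of Proposition~\ref{prop5pts-1}, and the sum-of-squares decomposition written there applies verbatim. The genuinely new case is the Strang scheme $\sigma=z\,(1-z^2)/6$, $\tau=-z^2\,(1-z^2)/24$. Here I would substitute these values into \eqref{eq:E5pts_2}, factor out $z>0$ to write $B=-z\,\widetilde B$ with $\widetilde B_{11}=1$, and then complete the square in $\widetilde B$ successively --- first in $U$, then in $V$ --- until only a multiple of $W^2$ survives. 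It then suffices to check that this residual $W^2$-coefficient is nonnegative for $z\in(0,1)$: after clearing denominators it equals $\tfrac{z\,(1-z^2)}{576\,(2+z^2)}$ times $32+6\,z+8\,z^2-6\,z^3-4\,z^4$, and this polynomial is positive on $[0,1]$ because there $-6\,z^3\ge-6\,z$, so it is bounded below by $32+8\,z^2-4\,z^4\ge 32$. One is thus led to a decomposition
\begin{multline*}
\begin{pmatrix} U \\ V \\ W \end{pmatrix}^{T} B \begin{pmatrix} U \\ V \\ W \end{pmatrix}
= -z\Bigg[ \Big( U - \tfrac{z}{2}\,V - \tfrac{z\,(1-z^2)}{24}\,W \Big)^{2} \\
+ \tfrac{2+z^2}{12}\Big( V + \tfrac{z^2\,(1-z^2)}{4\,(2+z^2)}\,W \Big)^{2} + C\,W^2 \Bigg],
\end{multline*}
with $C:=\tfrac{z\,(1-z^2)}{576\,(2+z^2)}\big(32+6\,z+8\,z^2-6\,z^3-4\,z^4\big)\ge 0$ for $z\in(0,1)$, which shows that $B$ is negative semi-definite.

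The difficulty is computational rather than conceptual: assembling \eqref{eq:E5pts_2} out of \eqref{eq:E5pts_0} is heavy bookkeeping with the $4\times4$ matrix $Q$ and the $3\times3$ matrix $M$, and in the Strang case the one real choice is the order of the square completions --- eliminating $U=\Phi_0$ first, since $-z$ is the single diagonal entry of obvious sign, keeps the surviving $2\times2$ block tractable. As in Proposition~\ref{prop5pts-1}, the decomposition degenerates only at the endpoints $z\in\{0,1\}$, which matches the fact that the interior dissipation eigenvalue $\lambda_1$ for the Strang scheme already vanishes.
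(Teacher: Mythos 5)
Your proposal is correct and follows essentially the same route as the paper: the same substitution of the extrapolation conditions into \eqref{eq:E5pts_0} (with the sign flips in the $\Delta\,\Phi_1$ and $D\,\Delta\,\Phi_1$ slots), the same observation that the first parameter pair reproduces the matrix already treated in Proposition~\ref{prop5pts-1}, and a sum-of-squares decomposition for the Strang case; your decomposition and residual coefficient $C$ check out numerically. The only difference is cosmetic: the paper groups the $V$-block as $\tfrac16 V^2+\tfrac{z^2}{12}\bigl(V+\tfrac{1-z^2}{4}W\bigr)^2$, which leaves the manifestly nonnegative remainder $\tfrac{z(1-z^2)(4-z^2)}{144}W^2$ and avoids the $(2+z^2)$ denominators and the quartic estimate you need.
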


\begin{proof}
We use \eqref{eq:E5pts_0} and insert the second order extrapolation conditions $\Delta \, \Phi_0 = \Delta^2 \, \Phi_0 =0$, from which we deduce 
the relations $D\, \Phi_1 =D\, \Phi_0$, and $D \, \Delta \, \Phi_1 =\Delta \, \Phi_1 =-\Delta \, \Phi_{-1}$. The boundary contribution $E$ in the energy 
balance \eqref{defE} reads
\begin{multline*}
E \, = \, \dfrac{1}{2} \, \begin{pmatrix}
\Phi_0 -D \, \Phi_0 \\ D \, \Phi_0 \\ D^2 \, \Phi_0 \\ -D^2 \, \Phi_0 \end{pmatrix}^T \, Q \, \begin{pmatrix}
\Phi_0 -D \, \Phi_0 \\ D \, \Phi_0 \\ D^2 \, \Phi_0 \\ -D^2 \, \Phi_0 \end{pmatrix} 
+\dfrac{1}{2} \, \begin{pmatrix}
\Phi_0 \\ D \, \Phi_0 \\ 0 \\ -D^2 \, \Phi_0 \end{pmatrix}^T \, Q \, \begin{pmatrix}
\Phi_0 \\ D \, \Phi_0 \\ 0 \\ -D^2 \, \Phi_0 \end{pmatrix} \\
+\dfrac{1}{2} \, \begin{pmatrix}
D^2 \, \Phi_0 \\ 0 \\ -D^2 \, \Phi_0 \end{pmatrix}^T \, M \, \begin{pmatrix}
D^2 \, \Phi_0 \\ 0 \\ -D^2  \, \Phi_0 \end{pmatrix} \, ,
\end{multline*}
which leads to (\ref{eq:E5pts_2}).

We first focus on the case $\sigma=z \, (1-z^2)/6$ and $\tau =-\sigma/2$, for which the matrix $B$ reads:
\begin{equation*}
B \, = \, -z\begin{pmatrix}
1 & -z/2 & -(1-z^2)/12 \\
-z/2 & \dfrac{1+2\, z^2}{6} & z \, (1-z^2)/12 \\
-(1-z^2)/12 & z \, (1-z^2)/12 & \dfrac{(1-z^2) \, (1+z) \, (4 -3\, z)}{72} \end{pmatrix} \, ,
\end{equation*}
which we have already analyzed in the proof of Proposition \ref{prop5pts-1}. In particular, we have already shown that $B$ is negative semi-definite for 
any $z \in (0,1)$ and the result of Proposition \ref{prop5pts-2} follows in that case. Finally, and most importantly, the fourth order Strang scheme $\sigma 
=z \, (1-z^2)/6$, $\tau =-z^2 \, (1-z^2)/24$ yields the expression,
\begin{equation*}
B \, = \, -z \, \begin{pmatrix}
1  & -z/2 & -z \, (1-z^2)/24 \\
-z/2 & \dfrac{1+2 \, z^2}{6} & z^2 \, (1-z^2)/24 \\
-z \, (1-z^2)/24 & z^2 \, (1-z^2)/24 & \frac{z \, (1-z^2) \, (4-z^2+z\, (1-z^2))}{144} \end{pmatrix} \, .
\end{equation*}
We thus compute:
\begin{multline*}
\begin{pmatrix}
U \\ V \\ W \end{pmatrix} ^T B \, \begin{pmatrix}
U \\ V \\ W \end{pmatrix} \\
= \, -z \left( \left( U -\dfrac{z}{2} \, V -\dfrac{z \, (1-z^2)}{24} \, W \right)^2 +\dfrac{1}{6} \, V^2 
+\dfrac{z^2}{12} \, \left( V +\dfrac{1-z^2}{4} \, W \right)^2 +\dfrac{z \, (1-z^2) \, (4-z^2)}{144} \, W^2 \right) \, ,
\end{multline*}
showing that $B$ is negative semi-definite for any $z \in (0,1)$. Hence the result of Proposition \ref{prop5pts-2} follows in the case of the fourth order 
Strang scheme.
\end{proof}

In view of all above results, the energy method seems to be a rather efficient tool to prove stability estimates for numerical schemes that are contractive 
in the whole space $\mathbb{Z}$ combined with some carefully selected extrapolation conditions at an outflow boundary. It is the purpose of the next section 
to illustrate the limitations of this energy approach, namely that it can not be applied to the more often considered translation invariant extrapolation conditions. 
For such boundary conditions and more general schemes (based for instance on multistep quadrature methods in time), the general theory for proving stability 
of numerical boundary conditions initiated in \cite{gks} remains the only available one.

\section{On the limitation of the energy method for analyzing numerical boundary conditions}

In this paragraph, we consider the fourth order Strang scheme \cite{strang}:
\begin{equation}\label{schemaS4}
A(z) \, := \, I -z \, D_0 +\dfrac{z^2}{2} \, \Delta +\dfrac{z \, (1-z^2)}{6} \, D_0 \, \Delta -\dfrac{z^2 \, (1-z^2)}{24} \, \Delta^2 \, .
\end{equation}
With a prescribed five point stencil, the finite difference scheme \eqref{schemaS4} is the only one that achieves fourth order accuracy (with respect 
to both space and time), just like the Lax-Wendroff scheme is the only three point scheme that achieves second order accuracy. Our goal is to study 
the semi-boundedness of \eqref{schemaS4} when implemented on a half line $\mathbb{Z}^-$ with extrapolation numerical boundary conditions, be 
they for instance of order $1$ or $2$. The extrapolation boundary conditions considered in \cite{kreissproc,goldberg,CL} are, in the terminology of 
\cite{goldberg-tadmor1,goldberg-tadmor}, \emph{translatory}, meaning that they are of the exact same form in each ghost cell (as opposed to the 
extrapolation conditions considered in Propositions \ref{prop5pts-1} and \ref{prop5pts-2}). The main results in \cite{kreissproc,kreiss1,goldberg,CL} 
show that, whatever the extrapolation order at the outflow boundary, the corresponding operator on $\ell^2(-\infty,0)$ is power-bounded. The proof 
of this result in \cite{kreissproc,kreiss1,goldberg} relies on the normal mode decomposition and power-boundedness follows from the general result 
in \cite{wu}, while the more direct proof of the same result in \cite{CL} relies first on the energy method for the Dirichlet boundary condition and on 
an induction argument with respect to the extrapolation order at the outflow boundary. Our goal below is to determine whether, as in Corollary 
\ref{cor2}, such stability estimates can be achieved by means of a straightforward energy method without using ``auxiliary problems''. The answer 
is \emph{negative}, see Theorems \ref{thm1} and \ref{thm2} below, which seems to indicate that the induction argument in \cite{CL} is more or less 
the shortest way to derive stability estimates for translatory extrapolation numerical boundary conditions. Examples of uniformly stable, though non 
dissipative, boundary conditions have been known for quite some time in the context of hyperbolic partial differential equations, see for instance the 
examples provided in \cite{BRSZ,Benoit}. As far as we are aware of, the example given by Theorem \ref{thm1} seems to be the first in the fully discrete 
setting.

Our first main result for the Strang scheme \eqref{schemaS4} is the following.

\begin{theorem}
\label{thm1}
There does not exist a parameter $z_0>0$, an integer $r \in \N$, and a continuous map $H$ from $[0,z_0]$ with values in $\mathcal{M}_r(\R)$ such 
that for all $z \in [0,z_0]$, $H(z)$ is a symmetric matrix satisfying the following property: for any sequence $\boldsymbol{\Phi} \in \ell^2(-\infty,2)$ satisfying 
the first order extrapolation boundary condition $\Phi_2=\Phi_1=\Phi_0$, the following energy inequality holds:
\begin{equation}
\label{toto}
\sum_{j \le -r} \Psi_j^2 +\begin{pmatrix}
\Psi_{-r+1} \\
\vdots \\
\Psi_0 \end{pmatrix}^T \, H(z) \, \begin{pmatrix}
\Psi_{-r+1} \\
\vdots \\
\Psi_0 \end{pmatrix} -\sum_{j \le -r} \Phi_j^2 -\begin{pmatrix}
\Phi_{-r+1} \\
\vdots \\
\Phi_0 \end{pmatrix}^T \, H(z) \, \begin{pmatrix}
\Phi_{-r+1} \\
\vdots \\
\Phi_0 \end{pmatrix} \, \le \, 0 \, ,
\end{equation}
with the sequence $\boldsymbol{\Psi} \in \ell^2(\mathbb{Z}^-)$ being defined by:
$$
\forall \, j \le 0 \, ,\quad \Psi_j \, := \, A(z) \, \Phi_j \, = \, \Phi_j -z \, D_0 \, \Phi_j +\dfrac{z^2}{2} \, \Delta \, \Phi_j 
+\dfrac{z \, (1-z^2)}{6} \, D_0 \, \Delta \, \Phi_j -\dfrac{z^2 \, (1-z^2)}{24} \, \Delta^2 \, \Phi_j \, .
$$
\end{theorem}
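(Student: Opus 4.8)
The plan is to argue by contradiction, exploiting the freedom to choose the sequence $\boldsymbol{\Phi}$ so as to probe the quadratic form in \eqref{toto} at the boundary. Suppose such a $z_0>0$, an integer $r$, and a continuous family of symmetric matrices $H(z)$ exist. The strategy is to test \eqref{toto} on carefully chosen \emph{compactly supported} sequences $\boldsymbol{\Phi}$ satisfying $\Phi_2=\Phi_1=\Phi_0$, and then extract information about the highest-frequency behavior of the resulting quadratic form. The key reduction is that \eqref{toto}, being a statement about a difference of two quadratic forms, defines (for each fixed $z$) a quadratic form in the finitely many ``active'' entries of $\boldsymbol{\Phi}$ near the boundary, and the inequality says this form is $\le 0$. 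I would first isolate the leading-order ($z\to 0$) behavior: at $z=0$ the operator $A(0)=I$ is the identity, so $\Psi_j=\Phi_j$ and both sides of \eqref{toto} cancel identically, meaning the quadratic form vanishes at $z=0$. Hence one must look at the first nonvanishing order in $z$, i.e. differentiate \eqref{toto} in $z$ and examine the linearization $\partial_z A(z)|_{z=0}=-D_0$ together with $\partial_z H(z)|_{z=0}$.

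Next I would set up the precise finite-dimensional picture. Writing $\boldsymbol{\Psi}=A(z)\boldsymbol{\Phi}$, the difference $\sum_{j\le 0}\Psi_j^2-\sum_{j\le 0}\Phi_j^2$ (extended appropriately so that the tail sums converge, using that $A(z)$ is a contraction on $\ell^2(\Z)$ from Lemma \ref{lem2}) admits, after the telescoping decomposition of Lemma \ref{lem2}, the representation as $E$ plus a genuinely dissipative interior sum $\sum_{j\le -r'}S_j$ for some fixed $r'$. The point is that \eqref{toto} with the matrix $H(z)$ asks for an \emph{alternative} telescoping that uses $H(z)$ in place of the particular boundary quadrature; subtracting the two representations, the hypothesis forces a specific quadratic form $G_z(\boldsymbol{\Phi})$ built from $Q$, $M$, $H(z)$ and the extrapolation relations $\Phi_2=\Phi_1=\Phi_0$ to be $\le 0$ on \emph{all} admissible $\boldsymbol{\Phi}$. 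I would then plug in a one-parameter family of oscillatory test sequences — e.g. $\Phi_j=(-1)^j \rho^{|j|}$ truncated, modified near $j=0,1,2$ to satisfy $\Phi_2=\Phi_1=\Phi_0$ — and let $\rho\to 1^-$. This singles out the contribution of the interior dissipation term $S_j$ at the checkerboard mode, which for the Strang scheme has $\lambda_1=0$ but $\lambda_2,\lambda_3<0$ (from Lemma \ref{lem2}); the vanishing of one eigenvalue is exactly the structural feature that will be exploited.

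The heart of the argument is then a \emph{sign obstruction}: one shows that no symmetric $H(z)$ can absorb the boundary terms generated by the first-order extrapolation $\Phi_2=\Phi_1=\Phi_0$ while keeping $G_z\le 0$. Concretely, I expect to exhibit two test sequences $\boldsymbol{\Phi}^{(1)},\boldsymbol{\Phi}^{(2)}$ (supported near the boundary, satisfying the extrapolation condition) such that the corresponding values of \eqref{toto} cannot both be $\le 0$ for small $z>0$, regardless of $H(z)$ — because the only part of \eqref{toto} that depends on $H(z)$ is the difference $\mathbf{v}_\Psi^T H(z)\mathbf{v}_\Psi-\mathbf{v}_\Phi^T H(z)\mathbf{v}_\Phi$, and by choosing the two test sequences so that $\mathbf{v}_\Phi$ is the same for both but $\mathbf{v}_\Psi$ (hence the $\Psi$-block) differs in a controlled way, the $H(z)$-dependent contribution can be pinned down or cancelled, leaving an $H$-independent quantity whose sign is wrong at order $z$ (or $z^2$). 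The continuity of $H$ on $[0,z_0]$ and the normalization $H(0)$ forced by the $z=0$ degeneracy is what makes the small-$z$ expansion legitimate. I expect the \textbf{main obstacle} to be the bookkeeping: correctly identifying the minimal finite window of indices that matters, implementing the extrapolation constraint $\Phi_2=\Phi_1=\Phi_0$ inside the quadratic form (which mixes $\Phi_0$ with the ghost values through $Q$ and $M$), and then choosing the test sequences so that the $H(z)$-dependence genuinely drops out. A secondary subtlety is handling the $r$-dependence — since $r$ is arbitrary, the argument must produce a contradiction \emph{uniformly in $r$}, which I would do by noting that enlarging $r$ only adds more unit-weight terms far from the boundary that are insensitive to the checkerboard test sequence in the limit $\rho\to 1^-$, so the obstruction localized near $j=0$ persists for every $r$.
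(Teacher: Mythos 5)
You have correctly spotted the paper's key opening move: assume such an $H$ exists, observe that the left-hand side $f(z)$ of \eqref{toto} vanishes at $z=0$ (since $A(0)=I$) and is nonpositive on $[0,z_0]$, hence $f'(0)\le 0$, and extract from this a constraint involving only $H(0)$ (continuity of $H$ suffices here; the $\partial_z H|_{z=0}$ you invoke need not exist and in any case drops out because the $H$-terms in $f$ are $O(z)$). However, from that point on your plan diverges from what actually produces the contradiction, and in ways that I believe would fail. First, the linearization is not $\partial_z A|_{z=0}=-D_0$ but $-D_0+\tfrac{1}{6}D_0\,\Delta$; the five-point term $\tfrac{1}{6}D_0\,\Delta$ is precisely what makes the obstruction appear (for three-point schemes the analogous first-order term is absorbable, cf.\ Corollary \ref{cor1}), because after telescoping $\sum_{j\le -r}$ it leaves cross products such as $\Phi_{-r-1}\,\Phi_{-r+1}$ that reach one cell \emph{outside} the block on which $H$ acts. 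Second, your proposed mechanism --- checkerboard test sequences $(-1)^j\rho^{|j|}$ probing the zero eigenvalue $\lambda_1=0$ of the interior dissipation --- is aimed at the wrong object: the Strang scheme is a genuine contraction on $\ell^2(\mathbb{Z})$ and the interior dissipation plays no role in the paper's contradiction, which lives entirely in the boundary quadratic form at order $z$. Your device of ``two test sequences with the same $\mathbf{v}_\Phi$'' does not cancel the $H$-dependence either, since the corresponding $\mathbf{v}_\Psi$ blocks differ and $H$ still enters through $\mathbf{v}_\Psi^T H\mathbf{v}_\Psi$.

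The most serious gap is your treatment of the arbitrary integer $r$. Enlarging $r$ does not merely ``add unit-weight terms far from the boundary'': it replaces $H$ by an arbitrary symmetric $r\times r$ matrix coupling all of $\Phi_{-r+1},\dots,\Phi_0$, which gives strictly more freedom to absorb the offending boundary terms, so a contradiction localized at small $r$ does not automatically persist. This is exactly where the paper's real work lies: writing $f'(0)\le 0$ as the nonpositivity of a quadratic form in $(\Phi_{-r-1},\dots,\Phi_0)$, noting that this form has no $\Phi_{-r-1}^2$ term so all cross products with $\Phi_{-r-1}$ must vanish, which forces the first row of $H(0)$ to be $(1,0,\dots,0)$, and then peeling off that row to reduce the inequality to the identical one with $r-1$ in place of $r$; a finite induction lands on the $r\le 2$ cases, where the single remaining free entry of $H(0)$ would have to equal both $1$ and $5/6$. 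Without some replacement for this induction (or another argument handling all $r$ simultaneously), your proposal does not yet prove the theorem.
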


\noindent Let us observe that we do not even assume the matrix $H(z)$ to be positive definite, which would be necessary to make the quantity:
$$
\sum_{j \le -r} \Phi_j^2 +\begin{pmatrix}
\Phi_{-r+1} \\
\vdots \\
\Phi_0 \end{pmatrix}^T \, H(z) \, \begin{pmatrix}
\Phi_{-r+1} \\
\vdots \\
\Phi_0 \end{pmatrix} \, ,
$$
the square of a norm on the space of sequences $\boldsymbol{\Phi} \in \ell^2(-\infty,2)$ with the prescribed boundary conditions. The obstacle for 
the existence of $H(z)$ in Theorem \ref{thm1} comes from the small values of $z$. It could very well be that for some $z \in (0,1)$, one can construct 
a real symmetric positive definite matrix $H(z)$ of size $r \in \N$ such that one has the optimal energy balance:
$$
\sum_{j \le -r} \Psi_j^2 +\begin{pmatrix}
\Psi_{-r+1} \\
\vdots \\
\Psi_0 \end{pmatrix}^T \, H(z) \, \begin{pmatrix}
\Psi_{-r+1} \\
\vdots \\
\Psi_0 \end{pmatrix} -\sum_{j \le -r} \Phi_j^2 -\begin{pmatrix}
\Phi_{-r+1} \\
\vdots \\
\Phi_0 \end{pmatrix}^T \, H(z) \, \begin{pmatrix}
\Phi_{-r+1} \\
\vdots \\
\Phi_0 \end{pmatrix}
+c \, (\Phi_{-1}^2 +\Phi_0^2) \, \le \, 0 \, ,
$$
with $c$ a positive constant. Theorem \ref{thm1} shows, however, that such a construction will not be possible for all values of $z \in (0,1)$ with 
the additional requirement that $H(z)$ extends continuously to $z=0$ (as was the case, for instance, for the Lax-Wendroff scheme with second 
order extrapolation by Corollary \ref{cor2}).

\begin{proof}[Proof of Theorem \ref{thm1}]
The proof of Theorem \ref{thm1} is based on an induction argument with respect to the integer $r$. As will follow from the argument below, it appears that 
the initial step of the induction argument corresponds to $r=2$. The first two cases $r=0$ and $r=1$ are dealt with separately, though the argument is similar 
to the one for $r=2$. Let us start with a general argument from which the conclusion of Theorem \ref{thm1} will follow.
\bigskip

$\bullet$ \underline{The derivative of the energy balance with respect to $z$ at $0$.} In all what follows, we assume that the opposite statement to 
Theorem \ref{thm1} holds, meaning that we assume that there exist a parameter $z_0>0$, an integer $r \in \N$, and a continuous map $H$ from 
$[0,z_0]$ with values in $\mathcal{M}_r(\R)$ such that for any $z \in [0,z_0]$, $H(z)$ is a real symmetric matrix with the previously stated property. 
Let us consider a sequence $\boldsymbol{\Phi} \in \ell^2(-\infty,2)$ that satisfies $\Phi_2=\Phi_1=\Phi_0$, and let us then denote with $f(z)$ the 
quantity on the left hand side of \eqref{toto}. Since the sequence $\boldsymbol{\Psi}(z)$ depends in a $\mathcal{C}^1$ fashion on $z$ in $\ell^2$, 
and since $H$ is continuous at $0$, the function $f$ is differentiable at $0$ and it holds that,
\begin{equation}
\label{derivee1}
f'(0) \, = \, \sum_{j \le -r} 2 \, \Psi_j(0) \, \Psi_j'(0) +2 \, \begin{pmatrix}
\Psi_{-r+1}'(0) \\
\vdots \\
\Psi_0'(0) \end{pmatrix}^T \, H(0) \, \begin{pmatrix}
\Phi_{-r+1} \\
\vdots \\
\Phi_0 \end{pmatrix} \, .
\end{equation}
Substituting the value of $\Psi_j'(0)$, \eqref{derivee1} can be expressed in terms of the sequence $\boldsymbol{\Phi}$ only, i.e.
\begin{align*}
f'(0) \, =& \, \sum_{j \le -r} -2 \, \Phi_j \, D_0 \, \Phi_j +\dfrac{1}{3} \, \Phi_j \, D_0 \, \Delta \, \Phi_j \\
& \, +\begin{pmatrix}
-(\Phi_{-r+2}-\Phi_{-r}) +(\Phi_{-r+3}-2\, \Phi_{-r+2}+2 \, \Phi_{-r}-\Phi_{-r-1})/6 \\
\vdots \\
-(\Phi_1-\Phi_{-1}) +(\Phi_2-2\, \Phi_1+2 \, \Phi_{-1}-\Phi_{-2})/6 \end{pmatrix}^T \, H(0) \, \begin{pmatrix}
\Phi_{-r+1} \\
\vdots \\
\Phi_0 \end{pmatrix} \\
=& \, -\dfrac{4}{3} \, \Phi_{-r} \, \Phi_{-r+1} +\dfrac{1}{6} \, (\Phi_{-r-1} \, \Phi_{-r+1} +\Phi_{-r} \, \Phi_{-r+2}) \\
& \, +\begin{pmatrix}
-(\Phi_{-r+2}-\Phi_{-r}) +(\Phi_{-r+3}-2\, \Phi_{-r+2}+2 \, \Phi_{-r}-\Phi_{-r-1})/6 \\
\vdots \\
-(\Phi_1-\Phi_{-1}) +(\Phi_2-2\, \Phi_1+2 \, \Phi_{-1}-\Phi_{-2})/6 \end{pmatrix}^T \, H(0) \, \begin{pmatrix}
\Phi_{-r+1} \\
\vdots \\
\Phi_0 \end{pmatrix} \, .
\end{align*}

We observe that $f$ vanishes at $z=0$ because $\Psi_j(0)=\Phi_j$ for all $j$, and $f$ takes nonpositive values on $[0,z_0]$ by \eqref{toto}. 
Hence $0$ is a maximum of $f$ on $[0,z_0]$ and the derivative $f'(0)$ is nonpositive. In other words, we have just obtained that there exists 
a real symmetric matrix $\mathcal{H}$ of size $r$ such that for any sequence $\boldsymbol{\Phi} \in \ell^2(-\infty,2)$ satisfying $\Phi_2=\Phi_1
=\Phi_0$, the following inequality holds:
\begin{multline}
\label{derivee}
-\dfrac{4}{3} \, \Phi_{-r} \, \Phi_{-r+1} +\dfrac{1}{6} \, (\Phi_{-r-1} \, \Phi_{-r+1} +\Phi_{-r} \, \Phi_{-r+2}) \\
+\begin{pmatrix}
-(\Phi_{-r+2}-\Phi_{-r}) +(\Phi_{-r+3}-2\, \Phi_{-r+2}+2 \, \Phi_{-r}-\Phi_{-r-1})/6 \\
\vdots \\
-(\Phi_1-\Phi_{-1}) +(\Phi_2-2\, \Phi_1+2 \, \Phi_{-1}-\Phi_{-2})/6 \end{pmatrix}^T \, \mathcal{H} \, \begin{pmatrix}
\Phi_{-r+1} \\
\vdots \\
\Phi_0 \end{pmatrix} \, \le \, 0 \, .
\end{multline}
It remains to examine the consequences of \eqref{derivee}, which is where the specific value of $r$ comes into play because the values 
$\Phi_1$ and $\Phi_2$ are not arbitrary due to the extrapolation boundary conditions.
\bigskip

$\bullet$ \underline{The case $r=0$.} In that case, the inequality \eqref{derivee} reduces to:
$$
-\dfrac{4}{3} \, \Phi_0 \, \Phi_1 +\dfrac{1}{6} \, (\Phi_{-1} \, \Phi_1 +\Phi_0 \, \Phi_2) \, \le \, 0 \, .
$$
Using the extrapolation boundary conditions $\Phi_2=\Phi_1=\Phi_0$, we end up with:
$$
-\dfrac{7}{6} \, \Phi_0^2 \, +\dfrac{1}{6} \, \Phi_0 \, \Phi_{-1} \, \le \, 0 \, ,
$$
which is obviously impossible since the values $\Phi_0,\Phi_{-1}$ are arbitrary. This means that we cannot use the standard $\ell^2$ norm on $\mathbb{Z}^-$ 
for proving the stability of \eqref{schemaS4} with first order extrapolation condition. Let us now deal with the next case ($r=1$) in the induction argument.
\bigskip

$\bullet$ \underline{The case $r=1$.} In that case, the inequality \eqref{derivee} reduces to:
\begin{equation*}
-\dfrac{4}{3} \, \Phi_{-1} \, \Phi_0 +\dfrac{1}{6} \, (\Phi_{-2} \, \Phi_0+\Phi_{-1} \, \Phi_1) 
-\mathcal{H} \, \Phi_0 \, (\Phi_1-\Phi_{-1}) +\dfrac{\mathcal{H}}{6} \, \Phi_0 \, (\Phi_2-2\, \Phi_1+2 \, \Phi_{-1}-\Phi_{-2}) \, \le \, 0 \, ,
\end{equation*}
where $\mathcal{H}$ is a real number. After using the boundary conditions $\Phi_2=\Phi_1=\Phi_0$, we get:
\begin{equation}
\label{ineg1-1}
-\dfrac{7}{6} \, \Phi_{-1} \, \Phi_0 +\dfrac{1}{6} \, \Phi_{-2} \, \Phi_0 -\mathcal{H} \, \Phi_0 \, (\Phi_0-\Phi_{-1}) 
-\dfrac{\mathcal{H}}{6} \, \Phi_0 \, (\Phi_0-2 \, \Phi_{-1}+\Phi_{-2}) \, \le \, 0 \, ,
\end{equation}
where now the three values $\Phi_0,\Phi_{-1},\Phi_{-2}$ are arbitrary. It is useful to introduce the new variables:
$$
y_3 \, := \, \Phi_0 \, ,\quad y_2 \, := \, \Phi_0-\Phi_{-1} \, ,\quad y_1 \, := \, \Phi_0-2\, \Phi_{-1}+\Phi_{-2} \, ,
$$
with which \eqref{ineg1-1} is rewritten as follows:
\begin{equation}
\label{ineg1-2}
\forall \, y \in \R^3 \, , \quad -y_3^2 +\left( \dfrac{5}{6} -\mathcal{H} \right) \, y_3 \, y_2 +\dfrac{1-\mathcal{H}}{6} \, y_3 \, y_1 \, \le \, 0 \, ,
\end{equation}
The latter inequality is obviously impossible since the only available parameter $\mathcal{H}$ should equal both $1$ and $5/6$ to cancel 
the off-diagonal terms $y_3 \, y_2$ and $y_3 \, y_1$. This completes the proof of Theorem \ref{thm1} in the case $r=1$. Let us now deal 
with the case $r=2$.
\bigskip

$\bullet$ \underline{The case $r=2$.} This is really the starting point of the induction argument, and we shall borrow the methodology introduced 
for the case $r=1$. Namely, when $r$ equals $2$, the inequality \eqref{derivee} reduces to:
\begin{equation}
\label{ineg2-1'}
-\dfrac{4}{3} \, \Phi_{-2} \, \Phi_{-1} +\dfrac{1}{6} \, (\Phi_{-3} \, \Phi_{-1}+\Phi_{-2} \, \Phi_0) 
+\begin{pmatrix}
-(\Phi_0-\Phi_{-2}) +(\Phi_1-2\, \Phi_0+2 \, \Phi_{-2}-\Phi_{-3})/6 \\
-(\Phi_1-\Phi_{-1}) +(\Phi_2-2\, \Phi_1+2 \, \Phi_{-1}-\Phi_{-2})/6 \end{pmatrix}^T \, \mathcal{H} \, \begin{pmatrix}
\Phi_{-1} \\
\Phi_0 \end{pmatrix} \, \le \, 0 \, .
\end{equation}
After using the boundary conditions $\Phi_2=\Phi_1=\Phi_0$, \eqref{ineg2-1'} reduces to:
\begin{equation}
\label{ineg2-1}
-\dfrac{4}{3} \, \Phi_{-2} \, \Phi_{-1} +\dfrac{1}{6} \, (\Phi_{-3} \, \Phi_{-1}+\Phi_{-2} \, \Phi_0) 
-\begin{pmatrix}
(\Phi_0-\Phi_{-2}) +(\Phi_0-2 \, \Phi_{-2}+\Phi_{-3})/6 \\
(\Phi_0-\Phi_{-1}) +(\Phi_0-2 \, \Phi_{-1}+\Phi_{-2})/6 \end{pmatrix}^T \, \mathcal{H} \, \begin{pmatrix}
\Phi_{-1} \\
\Phi_0 \end{pmatrix} \, \le \, 0 \, .
\end{equation}

Let us extend the strategy used in the case $r=1$, and introduce the new variables:
$$
y_4 \, := \, \Phi_0 \, ,\quad y_3 \, := \, D \, \Phi_0 \, ,\quad y_2 \, := \, D^2 \, \Phi_0 \, ,\quad y_1 \, := \, D^3 \, \Phi_0 \, .
$$
The inequality \eqref{ineg2-1} can be equivalently rewritten as:
\begin{equation}
\label{ineg2-2}
-y_4^2 +3 \, y_4\, y_3 -\dfrac{2}{3} \, y_4 \, y_2 -\dfrac{1}{6} \, y_4 \, y_1 -\dfrac{13}{6} \, y_3^2 +\dfrac{5}{6} \, y_3 \, y_2 +\dfrac{1}{6} \, y_3 \, y_1 
-\dfrac{1}{6} \, \begin{pmatrix}
13 \, y_3 -5 \, y_2 -y_1 \\
6 \, y_3 +y_2 \end{pmatrix}^T \, \mathcal{H} \, \begin{pmatrix}
y_4 -y_3 \\
y_4 \end{pmatrix} \, \le \, 0 \, ,
\end{equation}
where \eqref{ineg2-2} holds for all $y \in \R^4$, since $\Phi_0,\Phi_{-1},\Phi_{-2},\Phi_{-3}$ in \eqref{ineg2-1} are arbitrary. It is useful at this stage to 
introduce the coefficients of the symmetric matrix $\mathcal{H}$, and we thus write:
$$
\mathcal{H} \, = \, \begin{pmatrix}
h_{11} & h_{12} \\
h_{12} & h_{22} \end{pmatrix} \, .
$$

The quadratic form in $y \in \R^4$ on the left hand side of \eqref{ineg2-2} is nonpositive, and furthermore it has no $y_1^2$ term. This implies that the 
coefficients of the cross products $y_4 \, y_1$ and $y_3 \, y_1$ must vanish. Computing those coefficients, we get:
$$
h_{11} \, = \, 1 \, ,\quad h_{12} \, = \, 0 \, ,
$$
which means that the matrix $\mathcal{H}$ reads:
$$
\mathcal{H} \, = \, \begin{pmatrix}
1 & 0 \\
0 & h_{22} \end{pmatrix} \, ,
$$
and then \eqref{ineg2-2} reduces to:
$$
-y_4^2 +\left( \dfrac{5}{6} -h_{22} \right) \, y_4 \, y_3 +\dfrac{1-h_{22}}{6} \, y_4 \, y_2 \, \le \, 0 \, ,
$$
which is nothing else but the inequality \eqref{ineg1-2} we had obtained in the analysis of the case $r=1$ except for the shift in the indeces 
(one should only substitute $(y_4,y_3,y_2)$ in place of $(y_3,y_2,y_1)$ in \eqref{ineg1-2}, and $h_{22}$ in place of $\mathcal{H}$). As already 
observed in the analysis of the case $r=1$, we are led to a contradiction, which completes the proof of Theorem \ref{thm1} in the case $r=2$.
\bigskip

$\bullet$ \underline{The general case $r \ge 3$.} We go back to \eqref{derivee} and assume $r \ge 3$. In particular, the first line on the left 
hand side of \eqref{derivee} does not involve the ghost cell values $\Phi_1,\Phi_2$. Substituting the first order extrapolation boundary conditions 
$\Phi_2=\Phi_1=\Phi_0$ in \eqref{derivee}, we get the inequality:
\begin{multline}
\label{derivee3}
-\dfrac{4}{3} \, \Phi_{-r} \, \Phi_{-r+1} +\dfrac{1}{6} \, (\Phi_{-r-1} \, \Phi_{-r+1}+\Phi_{-r} \, \Phi_{-r+2}) \\
+\begin{pmatrix}
-(\Phi_{-r+2}-\Phi_{-r}) +(\Phi_{-r+3}-2\, \Phi_{-r+2}+2 \, \Phi_{-r}-\Phi_{-r-1})/6 \\
\vdots \\
-(\Phi_{-1}-\Phi_{-3}) +(\Phi_0-2\, \Phi_{-1}+2 \, \Phi_{-3}-\Phi_{-4})/6 \\
-(\Phi_0-\Phi_{-2}) -(\Phi_0-2 \, \Phi_{-2}+\Phi_{-3})/6 \\
-(\Phi_0-\Phi_{-1}) -(\Phi_0-2 \, \Phi_{-1}+\Phi_{-2})/6 \end{pmatrix}^T \, \mathcal{H} \, \begin{pmatrix}
\Phi_{-r+1} \\
\vdots \\
\Phi_{-2} \\
\Phi_{-1} \\
\Phi_0 \end{pmatrix} \, \le \, 0 \, .
\end{multline}
We introduce the variable $y \in \R^{r+2}$ defined by:
$$
\forall \, \ell \, = \, 1,\dots,r+2 \, ,\quad y_\ell \, := \, D^{r+2-\ell} \, \Phi_0 \, ,
$$
which, conversely, corresponds to:
$$
\forall \, \ell \, = \, 0,\dots,r+1 \, ,\quad \Phi_{-\ell} \, = \, D^\ell \, y_{r+2} \, .
$$
Using from now on the coordinates of $y \in \R^{r+2}$ as free parameters, \eqref{derivee3} reads:
\begin{multline}
\label{derivee4}
-\dfrac{4}{3} \, D^{r} \, y_{r+2} \, D^{r-1} \, y_{r+2} +\dfrac{1}{6} \, 
(D^{r+1} \, y_{r+2} \, D^{r-1} \, y_{r+2} +D^{r} \, y_{r+2} \, D^{r-2} \, y_{r+2}) \\
+\begin{pmatrix}
-2\, D^{r-2} \, y_{r+1} +D^{r-2} \, y_{r} +(2 \, D^{r-3} \, y_{r-1} -D^{r-3} \, y_{r-2})/6 \\
\vdots \\
-2\, D \, y_{r+1} +D \, y_{r} +(2 \, y_{r-1} -y_{r-2})/6 \\
-(13 \, y_{r+1}-5 \, y_{r} -y_{r-1})/6 \\
-y_{r+1} -y_{r}/6 \end{pmatrix}^T \, \mathcal{H} \, \begin{pmatrix}
D^{r-1} \, y_{r+2} \\
\vdots \\
D^2 \, y_{r+2} \\
D \, y_{r+2} \\
y_{r+2} \end{pmatrix} \, \le \, 0 \, .
\end{multline}

We are not going to compute all the coefficients of the quadratic form (in $y$) arising on the left hand side of \eqref{derivee4}. It is useful however 
to observe that the first two coordinates $y_1$ and $y_2$ of $y$ do not appear in the expressions of $D^{r-1} \, y_{r+2}$, \dots, $D \, y_{r+2}$, $y_{r+2}$. 
Therefore, if we rewrite the quadratic form (in $y$) arising on the left hand side of \eqref{derivee4} as $y^T \, S \, y$, with $S$ a real symmetric matrix of 
size $r+2$, then not only $S$ is nonpositive because of \eqref{derivee4}, but $S$ also reads:
$$
S \, = \, \begin{pmatrix}
\widetilde{S} & \Upsilon_2 & \Upsilon_1 \\
\Upsilon_2^T & 0 & 0 \\
\Upsilon_1^T & 0 & 0 \end{pmatrix} \, ,
$$
with $\Upsilon_1,\Upsilon_1 \in \R^r$, and $\widetilde{S}$ a real symmetric matrix of size $r$. Since $S$ is nonnegative, we must necessarily have 
$\Upsilon_1=\Upsilon_2=0$. In other words, this means that no cross product of the form $y_1 \, y_3,\dots,y_1 \, y_{r+2}$ or $y_2 \, y_3,\dots,y_2 \, y_{r+2}$ 
arises on the left hand side of \eqref{derivee4}, or, equivalently, that the quantity on the left hand side of \eqref{derivee4} does not depend on $(y_1,y_2)$. 
Computing the partial derivative with respect to $y_1$, we get the relation:
$$
\dfrac{(-1)^{r+1}}{6} \, \left( (1-\mathcal{H}_{11}) \, D^{r-1} \, y_{r+2} +\sum_{\ell=2}^{r} \mathcal{H}_{1\ell} \, D^{r-\ell} \, y_{r+2} \right) \, = \, 0 \, ,
$$
from which we deduce that the first line of $\mathcal{H}$ should read:
$$
\begin{pmatrix}
1 & 0 & \cdots & 0 \end{pmatrix} \, .
$$
Since $\mathcal{H}$ is symmetric, \eqref{derivee4} reduces to:
\begin{align}
& -\dfrac{4}{3} \, D^{r} \, y_{r+2} \, D^{r-1} \, y_{r+2} +\dfrac{1}{6} \, (D^{r+1} \, y_{r+2} \, D^{r-1} \, y_{r+2}+D^{r} \, y_{r+2} \, D^{r-2} \, y_{r+2}) \notag \\
&+ \Big( -2\, D^{r-2} \, y_{r+1} +D^{r-2} \, y_{r} +\dfrac{1}{3} \, D^{r-3} \, y_{r-1} -\dfrac{1}{6} \, D^{r-3} \, y_{r-2} \Big) \, D^{r-1} \, y_{r+2} \label{derivee5} \\
&+\begin{pmatrix}
-2\, D^{r-3} \, y_{r+1} +D^{r-3} \, y_{r} +(2 \, D^{r-4} \, y_{r-1} -D^{r-4} \, y_{r-2})/6 \\
\vdots \\
-2\, D \, y_{r+1} +D \, y_{r} +(2 \, y_{r-1} -y_{r-2})/6 \\
-(13 \, y_{r+1}-5 \, y_{r} -y_{r-1})/6 \\
-y_{r+1} -y_{r}/6 \end{pmatrix}^T \, \mathcal{H}_\sharp \, \begin{pmatrix}
D^{r-2} \, y_{r+2} \\
\vdots \\
D^2 \, y_{r+2} \\
D \, y_{r+2} \\
y_{r+2} \end{pmatrix} \, \le \, 0 \, ,\notag
\end{align}
where the real symmetric matrix $\mathcal{H}_\sharp$ of size $r-1$ corresponds to the block decomposition of $\mathcal{H}$:
$$
\mathcal{H} \, = \, \begin{pmatrix}
1 & 0 \\
0 & \mathcal{H}_\sharp \end{pmatrix} \, .
$$
We can simplify the first two lines of \eqref{derivee5} by using the relation:
\begin{multline*}
-2\, D^{r-2} \, y_{r+1} +D^{r-2} \, y_{r} +\dfrac{1}{3} \, D^{r-3} \, y_{r-1} -\dfrac{1}{6} \, D^{r-3} \, y_{r-2} \\
= \, -\big( D^{r-2} \, y_{r+2} -D^{r} \, y_{r+2} \big) +\dfrac{1}{6} \, \big( D^{r-3} \, y_{r+2} -2 \, D^{r-2} \, y_{r+2} 
+2 \, D^{r} \, y_{r+2} -D^{r+1} \, y_{r+2} \big) \, ,
\end{multline*}
and \eqref{derivee5} can be rewritten as:
\begin{multline}
\label{derivee6}
-\dfrac{4}{3} \, D^{r-1} \, y_{r+2} \, D^{r-2} \, y_{r+2} +\dfrac{1}{6} \, (D^{r} \, y_{r+2} \, D^{r-2} \, y_{r+2} +D^{r-1} \, y_{r+2} \, D^{r-3} \, y_{r+2}) \\
+\begin{pmatrix}
-2\, D^{r-3} \, y_{r+1} +D^{r-3} \, y_{r} +(2 \, D^{r-4} \, y_{r-1} -D^{r-4} \, y_{r-2})/6 \\
\vdots \\
-2\, D \, y_{r+1} +D \, y_{r} +(2 \, y_{r-1} -y_{r-2})/6 \\
-(13 \, y_{r+1}-5 \, y_{r} -y_{r-1})/6 \\
-y_{r+1} -y_{r}/6 \end{pmatrix}^T \, \mathcal{H}_\sharp \, \begin{pmatrix}
D^{r-2} \, y_{r+2} \\
\vdots \\
D^2 \, y_{r+2} \\
D \, y_{r+2} \\
y_{r+2} \end{pmatrix} \, \le \, 0 \, .
\end{multline}
Shfiting the indeces in the variables, that is introducing the vector:
$$
(\tilde{y}_{r+1},\dots,\tilde{y}_1) \, := \, (y_{r+2},\dots,y_2) \, ,
$$
and forgetting about the tilde, we see that \eqref{derivee6} is exactly the same as \eqref{derivee4} with the integer $r-1$ in place of $r$. 
By a finite induction process, we can therefore show that the validity of \eqref{derivee4} for some real symmetric matrix $\mathcal{H}$ of size 
$r$ implies the validity of \eqref{ineg2-2} (which is exactly \eqref{derivee4} in the particular case $r=2$), and we have already seen that 
this leads to a contradiction. The proof of Theorem \ref{thm1} is now complete.
\end{proof}

\noindent The exact same argument of proof can be used to deal with the case of second order extrapolation at the boundary. We shall not 
reproduce the proof here and leave the (minor) modifications to the interested reader. We thus only state the final result, which is entirely 
similar to Theorem \ref{thm1} above except for the extrapolation conditions.

\begin{theorem}
\label{thm2}
There does not exist a parameter $z_0>0$, an integer $r \in \N$, and a continuous map $H$ from $[0,z_0]$ with values in $\mathcal{M}_r(\R)$ 
such that for any $z \in [0,z_0]$, $H(z)$ is a symmetric matrix satisfying the following property: for any sequence $\boldsymbol{\Phi} \in \ell^2(-\infty,2)$ 
verifying the second order extrapolation boundary conditions $\Delta \, \Phi_1=\Delta \, \Phi_0=0$, with the sequence $\boldsymbol{\Psi}$ being 
defined by:
$$
\forall \, j \le 0 \, ,\quad \Psi_j \, := \, A(z) \, \Phi_j \, = \, \Phi_j -z \, D_0 \, \Phi_j +\dfrac{z^2}{2} \, \Delta \, \Phi_j 
+\dfrac{z \, (1-z^2)}{6} \, D_0 \, \Delta \, \Phi_j -\dfrac{z^2 \, (1-z^2)}{24} \, \Delta^2 \, \Phi_j \, ,
$$
then the following energy inequality holds:
\begin{equation*}
\sum_{j \le -r} \Psi_j^2 +\begin{pmatrix}
\Psi_{-r+1} \\
\vdots \\
\Psi_0 \end{pmatrix}^T \, H(z) \, \begin{pmatrix}
\Psi_{-r+1} \\
\vdots \\
\Psi_0 \end{pmatrix} -\sum_{j \le -r} \Phi_j^2 -\begin{pmatrix}
\Phi_{-r+1} \\
\vdots \\
\Phi_0 \end{pmatrix}^T \, H(z) \, \begin{pmatrix}
\Phi_{-r+1} \\
\vdots \\
\Phi_0 \end{pmatrix} \, \le \, 0 \, .
\end{equation*}
\end{theorem}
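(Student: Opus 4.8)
\textbf{Proof plan for Theorem \ref{thm2}.} The strategy is to mimic exactly the proof of Theorem \ref{thm1}, relying on the observation (already made at the end of the excerpt) that only ``minor modifications'' are needed. The key structural fact that makes this work is that the \emph{linear part in $z$} of the operator $A(z)$ is identical for both theorems: differentiating the energy balance at $z=0$ produces the very same quadratic form in the $\Phi_j$'s, built from $-2\,\Phi_j\,D_0\,\Phi_j$ and $\tfrac13\,\Phi_j\,D_0\,\Delta\,\Phi_j$ plus the $H(0)$-term. The only difference is the constraint on the ghost cells: instead of $\Phi_2=\Phi_1=\Phi_0$ we now impose $\Delta\,\Phi_1=\Delta\,\Phi_0=0$, i.e. $\Phi_1-2\,\Phi_0+\Phi_{-1}=0$ and $\Phi_2-2\,\Phi_1+\Phi_0=0$, which gives $\Phi_1=2\,\Phi_0-\Phi_{-1}$ and $\Phi_2=3\,\Phi_0-2\,\Phi_{-1}$.

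\textbf{First steps.} First I would establish, verbatim as in the proof of Theorem \ref{thm1}, that if a continuous symmetric-matrix-valued map $H(\cdot)$ with the stated property existed, then for $f(z)$ the left-hand side of the energy inequality one has $f(0)=0$, $f\le0$ on $[0,z_0]$, hence $f'(0)\le0$. Computing $\Psi_j'(0)=-D_0\,\Phi_j+\tfrac16\,D_0\,\Delta\,\Phi_j$ and substituting, one obtains the analogue of \eqref{derivee}: there exists a real symmetric matrix $\mathcal{H}$ of size $r$ such that for all admissible $\boldsymbol{\Phi}$,
\begin{equation*}
-\dfrac{4}{3} \, \Phi_{-r} \, \Phi_{-r+1} +\dfrac{1}{6} \, (\Phi_{-r-1} \, \Phi_{-r+1} +\Phi_{-r} \, \Phi_{-r+2})
+\begin{pmatrix} \ast \\ \vdots \\ \ast \end{pmatrix}^T \mathcal{H} \begin{pmatrix} \Phi_{-r+1} \\ \vdots \\ \Phi_0 \end{pmatrix} \le 0 \, ,
\end{equation*}
where the starred column has the same entries as in \eqref{derivee}, the top-most ones still involving the ghost values $\Phi_1,\Phi_2$ when $r\le1$.

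\textbf{The case analysis.} Next I would run the same case split. For $r=0$ the inequality reduces, after substituting $\Phi_1=2\,\Phi_0-\Phi_{-1}$, $\Phi_2=3\,\Phi_0-2\,\Phi_{-1}$, to a quadratic form in $(\Phi_0,\Phi_{-1})$ that contains no $\Phi_{-1}^2$ term but a nonzero $\Phi_0\,\Phi_{-1}$ cross term (one checks $-\tfrac43\Phi_0\Phi_1+\tfrac16\Phi_{-1}\Phi_1+\tfrac16\Phi_0\Phi_2$ with these substitutions gives something like $-\tfrac{19}{6}\Phi_0^2+\tfrac{5}{6}\Phi_0\Phi_{-1}$), which cannot be $\le0$ for all values — contradiction. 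For $r=1$ one introduces $y_3:=\Phi_0$, $y_2:=D\Phi_0$, $y_1:=D^2\Phi_0$, uses the boundary conditions to eliminate $\Phi_1,\Phi_2$, and arrives at an inequality of the form $-c\,y_3^2+\alpha(\mathcal{H})\,y_3\,y_2+\beta(\mathcal{H})\,y_3\,y_1\le0$ where the single scalar $\mathcal{H}$ cannot simultaneously kill both cross terms — the same obstruction as \eqref{ineg1-2}. For $r=2$ one sets $y_\ell:=D^{4-\ell}\Phi_0$ and finds, because the quadratic form has no $y_1^2$ term, that the coefficients of $y_4\,y_1$ and $y_3\,y_1$ must vanish, forcing the first row of $\mathcal{H}$ to be $(1,0)$; the residual inequality is then precisely the $r=1$ one (with $h_{22}$ in place of $\mathcal{H}$), hence again impossible. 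Finally the general induction $r\ge3\to r-1$ proceeds word for word as in the proof of Theorem \ref{thm1}: passing to the coordinates $y_\ell:=D^{r+2-\ell}\Phi_0$, the absence of $y_1^2,y_2^2$ and of cross terms $y_1 y_k$, $y_2 y_k$ forces the first row of $\mathcal{H}$ to be $(1,0,\dots,0)$, and after a shift of indices the inequality becomes the one for $r-1$, terminating at the contradictory case $r=2$.

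\textbf{Main obstacle.} The only genuinely new computation — and hence the step most prone to error — is re-deriving the small-$r$ base inequalities with the \emph{different} ghost-cell substitutions $\Phi_1=2\,\Phi_0-\Phi_{-1}$, $\Phi_2=3\,\Phi_0-2\,\Phi_{-1}$ in place of $\Phi_2=\Phi_1=\Phi_0$. One must double-check that these substitutions still (i) leave no $y_1^2$ (resp. $y_1^2,y_2^2$) term, so that the ``coefficient of the highest-difference cross term must vanish'' argument applies, and (ii) produce two \emph{incompatible} linear conditions on the entries of $\mathcal{H}$ — exactly the conflict ``$\mathcal{H}$ should equal both $1$ and $5/6$'' that yields the contradiction. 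Since the derivative-at-zero quadratic form is unchanged and the admissible subspace for $r\ge2$ does not touch $\Phi_1,\Phi_2$ at all (the interior part of \eqref{derivee} only reaches up to $\Phi_0$ once $r\ge3$, and the $\mathcal H$-term reaches $\Phi_1,\Phi_2$ only through its first two rows which get killed by the induction), the structural skeleton is identical; the risk is purely arithmetic in the $r=0,1$ seeds. For $r\ge3$, once the first row of $\mathcal H$ is pinned to $(1,0,\dots,0)$ the ghost values $\Phi_1,\Phi_2$ have disappeared from the inequality and the reduction to $r-1$ is independent of which extrapolation was used, so no new work arises there.
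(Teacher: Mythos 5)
Your strategy is exactly the one the paper intends: it does not write out a proof of Theorem \ref{thm2}, stating only that the argument of Theorem \ref{thm1} applies with minor modifications, and your skeleton (differentiate the energy balance at $z=0$ to get the analogue of \eqref{derivee}, treat $r=0,1,2$ by hand, then run the downward induction, which never touches the ghost cells once the first row of $\mathcal{H}$ is pinned) is the right one. Your observation that the induction step $r\ge 3\to r-1$ is insensitive to the choice of extrapolation is also correct, since the ghost values enter only through the last two entries of the vector multiplying $\mathcal{H}$, which are consumed only in the base cases.

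However, the base-case computations that you yourself flag as the crux are wrong as stated, in a way that changes the mechanism of the contradiction (though the contradiction survives). Substituting $\Phi_1=2\Phi_0-\Phi_{-1}$ and $\Phi_2=3\Phi_0-2\Phi_{-1}$ into the $r=0$ inequality gives
\begin{equation*}
-\dfrac{13}{6}\,\Phi_0^2+\dfrac{4}{3}\,\Phi_0\,\Phi_{-1}-\dfrac{1}{6}\,\Phi_{-1}^2 \, \le \, 0 \, ,
\end{equation*}
not the expression you wrote: there \emph{is} a $\Phi_{-1}^2$ term (coming from $\tfrac16\Phi_{-1}\Phi_1$ with $\Phi_1=2\Phi_0-\Phi_{-1}$), so the ``cross term with no matching diagonal term'' argument does not apply; one must instead check that the associated $2\times 2$ matrix has negative determinant ($\tfrac{13}{36}-\tfrac{4}{9}=-\tfrac{1}{12}<0$), hence is indefinite. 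Likewise for $r=1$ the reduced inequality is
\begin{equation*}
-y_3^2-\dfrac{1}{6}\,y_2^2+(1-2\,\mathcal{H})\,y_3\,y_2+\dfrac{1-\mathcal{H}}{6}\,y_3\,y_1 \, \le \, 0 \, ,
\end{equation*}
which carries a $-\tfrac16 y_2^2$ term absent from \eqref{ineg1-2}; the obstruction is not that ``$\mathcal{H}$ must equal two incompatible values'' but that the missing $y_1^2$ term forces $\mathcal{H}=1$, after which the $(y_3,y_2)$ block has entries $-1$, $-1/2$, $-1/2$, $-1/6$ and determinant $-\tfrac{1}{12}<0$, hence is again indefinite. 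The same recomputation is needed at $r=2$, where the last two entries of the $\mathcal{H}$-vector become $-2\,D\,\Phi_0-\tfrac16\,D^2\,\Phi_0$ (and its analogue) rather than $-D\,\Phi_0-\tfrac16\,D^2\,\Phi_0$, so the coefficients of \eqref{ineg2-2} and the forced values of $h_{11},h_{12}$ change. So: right route, and the theorem does follow this way, but the seeds must be recomputed and argued via indefiniteness of $2\times2$ blocks rather than via the cross-term incompatibility you describe.
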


\noindent Theorems \ref{thm1} and \ref{thm2} show that for the fourth order Strang scheme \eqref{schemaS4}, stability for the \emph{translatory} first 
or second order extrapolation conditions:
\begin{align*}
\text{\rm (first order)} \quad & \Phi_2 \, = \, \Phi_1 \, = \, \Phi_0 \, ,\\
\text{\rm (second order)} \quad & \Delta \, \Phi_1 \, = \, \Delta \, \Phi_0 \, = \, 0 \, ,
\end{align*}
cannot be obtained by a `straightforward' energy argument (at least for all relevant values of the CFL parameter $z$), meaning by the construction 
of an energy that is a finite rank perturbation of the identity that is non-increasing for the associated evolution operator. It is known nevertheless that 
these numerical boundary conditions satisfy the strong stability condition of \cite{gks}, see \cite{goldberg}. These two above examples clearly indicate 
that the theory initiated in \cite{gks} is the only one able to capture stability for numerical boundary conditions in general.

\appendix
\section{Higher order integration by parts decompositions}

This appendix is devoted to the proof of the relations \eqref{eq:tele_11}-\eqref{eq:tele_14} which we have used in the proof of Lemma \ref{lem2}. 
Let us start with the proof of formula \eqref{eq:tele_11}, which we rewrite here for the reader's convenience:
\begin{multline}
\label{appA-formule1}
2 \, D_0 \, \Phi_j \, D_0 \, \Delta \, \Phi_j \, = \, -2 \, (\Delta \, \Phi_j)^2 +\dfrac{1}{4} \, \left[ (D \, \Delta \, \Phi_j)^2 +(D \, \Delta \, \Phi_{j+1})^2 \right] \\
+\left[ (D \, \Phi_{j+1} -\dfrac{1}{4} \, D \, \Delta \, \Phi_{j+1}) \, (D \, \Delta \, \Phi_{j+1} +2 \, \Delta \, \Phi_j) \right] 
-\left[ (D \, \Phi_j -\dfrac{1}{4} \, D \, \Delta \, \Phi_j) \, (D \, \Delta \, \Phi_j +2 \, \Delta \, \Phi_{j-1}) \right] \, .
\end{multline}
We first apply the formula \eqref{eq:tele_04} to get:
\begin{multline}
\label{appA-formule1-1}
2 \, D_0 \, \Phi_j \, \Delta \, D_0 \, \Phi_j \, = \, -(D \, D_0 \, \Phi_j)^2 -(D \, D_0 \, \Phi_{j+1})^2 \\
+\left[ 2 \, D_0 \, \Phi_j \, D \, D_0 \, \Phi_{j+1} +(D \, D_0 \, \Phi_{j+1})^2 \right] -\left[ 2 \, D_0 \, \Phi_{j-1} \, D \, D_0 \, \Phi_j +(D \, D_0 \, \Phi_j)^2 \right] \, .
\end{multline}
and then rewrite the first line on the right hand side of \eqref{appA-formule1-1} (that is, the symmetric terms) as:
\begin{align*}
(D \, D_0 \, \Phi_j)^2 +(D \, D_0 \, \Phi_{j+1})^2 \, =& \, \dfrac{1}{4} \, (\Delta \, \Phi_{j-1} +\Delta \, \Phi_j)^2 +\dfrac{1}{4} \, (\Delta \, \Phi_j +\Delta \, \Phi_{j+1})^2 \\
=& \, \dfrac{1}{4} \, (-D \, \Delta \, \Phi_j +2 \, \Delta \, \Phi_j)^2 +\dfrac{1}{4} \, (2 \, \Delta \, \Phi_j +D \, \Delta \, \Phi_{j+1})^2 \\
=& \, 2 \, (\Delta \, \Phi_j)^2 +\Delta \, \Phi_j \, \Delta^2 \, \Phi_j +\dfrac{1}{4} \, \big( (D \, \Delta \, \Phi_j)^2 +(D \, \Delta \, \Phi_{j+1})^2 \big)
\end{align*}
We use again \eqref{eq:tele_04} for the term $\Delta \, \Phi_j \, \Delta^2 \, \Phi_j$ and get:
\begin{multline*}
(D \, D_0 \, \Phi_j)^2 +(D \, D_0 \, \Phi_{j+1})^2 \, = \, 2 \, (\Delta \, \Phi_j)^2 -\dfrac{1}{4} \, \big( (D \, \Delta \, \Phi_j)^2 +(D \, \Delta \, \Phi_{j+1})^2 \big) \\
+\left[ \dfrac{1}{2} (\Delta \, \Phi_{j+1})^2 -\dfrac{1}{2} (\Delta \, \Phi_j)^2 \right] -\left[ \dfrac{1}{2} (\Delta \, \Phi_j)^2 -\dfrac{1}{2} (\Delta \, \Phi_{j-1})^2 \right] \, .
\end{multline*}
Substituting this relation in the first line on the right hand side of \eqref{appA-formule1-1}, we have thus obtained the decomposition:
$$
2 \, D_0 \, \Phi_j \, \Delta \, D_0 \, \Phi_j \, = \, -2 \, (\Delta \, \Phi_j)^2 +\dfrac{1}{4} \, \big( (D \, \Delta \, \Phi_j)^2 +(D \, \Delta \, \Phi_{j+1})^2 \big) +T_j -T_{j-1} \, ,
$$
with
\begin{align*}
T_j \, :=& \, 2 \, D_0 \, \Phi_j \, D \, D_0 \, \Phi_{j+1} +(D \, D_0 \, \Phi_{j+1})^2 -\dfrac{1}{2} (\Delta \, \Phi_{j+1})^2 +\dfrac{1}{2} (\Delta \, \Phi_j)^2 \\
=& \, (D_0 \, \Phi_{j+1})^2 -(D_0 \, \Phi_j)^2 -\dfrac{1}{2} (\Delta \, \Phi_{j+1})^2 +\dfrac{1}{2} (\Delta \, \Phi_j)^2 \, .
\end{align*}
The decomposition \eqref{appA-formule1} then follows by just rewriting the quantity $T_j$ in terms of $D\, \Phi_{j+1}$, $\Delta \, \Phi_j$ and $D\, \Delta \, \Phi_{j+1}$. 
The (small) details are left to the reader.
\bigskip

We now wish to justify the purely telescopic formula \eqref{eq:tele_12}, which we also rewrite here for convenience:
\begin{multline}
\label{appA-formule2}
2 \, \Phi_j \, D_0 \, \Delta \, \Phi_j \, = \, \left[ \Phi_j \, (D \, \Delta \, \Phi_{j+1} +2 \, \Delta \, \Phi_j) +(\Delta \, \Phi_j -D \, \Phi_{j+1}) \, D \, \Phi_{j+1} \right] \\
- \left[ \Phi_{j-1} \, (D \, \Delta \, \Phi_j +2 \, \Delta \, \Phi_{j-1}) +(\Delta \, \Phi_{j-1} -D \, \Phi_j) \, D \, \Phi_j \right] \, .
\end{multline}
Let us write the left hand side of \eqref{appA-formule2} as:
$$
2 \, \Phi_j \, D_0 \, \Delta \, \Phi_j \, = \, \begin{pmatrix}
\Phi_{j+2} \\
\Phi_{j+1} \\
\Phi_j \\
\Phi_{j-1} \\
\Phi_{j-2} \end{pmatrix}^T \, \begin{pmatrix}
0 & 0 & 1/2 & 0 & 0 \\
0 & 0 & -1 & 0 & 0 \\
1/2 & -1 & 0 & 1 & -1/2 \\
0 & 0 & 1 & 0 & 0 \\
0 & 0 & -1/2 & 0 & 0 \end{pmatrix} \, \begin{pmatrix}
\Phi_{j+2} \\
\Phi_{j+1} \\
\Phi_j \\
\Phi_{j-1} \\
\Phi_{j-2} \end{pmatrix} \, ,
$$
and decompose the corresponding symmetric matrix in a telescopic way:
$$
\begin{pmatrix}
0 & 0 & 1/2 & 0 & 0 \\
0 & 0 & -1 & 0 & 0 \\
1/2 & -1 & 0 & 1 & -1/2 \\
0 & 0 & 1 & 0 & 0 \\
0 & 0 & -1/2 & 0 & 0 \end{pmatrix} \, = \, \begin{pmatrix}
0 & 0 & 1/2 & 0 & 0 \\
0 & 0 & -1 & 1/2 & 0 \\
1/2 & -1 & 0 & 0 & 0 \\
0 & 1/2 & 0 & 0 & 0 \\
0 & 0 & 0 & 0 & 0 \end{pmatrix} \, - \, \begin{pmatrix}
0 & 0 & 0 & 0 & 0 \\
0 & 0 & 0 & 1/2 & 0 \\
0 & 0 & 0 & -1 & 1/2 \\
0 & 1/2 & -1 & 0 & 0 \\
0 & 0 & 1/2 & 0 & 0 \end{pmatrix} \, .
$$
At this stage, we have obtained the telescopic decomposition:
$$
2 \, \Phi_j \, D_0 \, \Delta \, \Phi_j \, = \, \left[ \Phi_j \, \Phi_{j+2} -2 \, \Phi_j \, \Phi_{j+1} +\Phi_{j-1} \, \Phi_{j+1} \right] 
-\left[ \Phi_{j-1} \, \Phi_{j+1} -2 \, \Phi_{j-1} \, \Phi_j +\Phi_{j-2} \, \Phi_j \right] \, ,
$$
and the proof of \eqref{appA-formule2} follows by rewriting the telescopic term as:
\begin{align*}
\Phi_j \, \Phi_{j+2} -2 \, \Phi_j \, \Phi_{j+1} +\Phi_{j-1} \, \Phi_{j+1} \, =& \, \Phi_j \, \Delta \, \Phi_{j+1} +\Phi_j \, \Delta \, \Phi_j -D\, \Phi_j \, D \, \Phi_{j+1} \\
=& \, \Phi_j \, (D \, \Delta \, \Phi_{j+1} +2 \, \Delta \, \Phi_j) +(\Delta \, \Phi_j -D\, \Phi_{j+1}) \, D \, \Phi_{j+1} \, .
\end{align*}
This completes the proof of \eqref{appA-formule2}.
\bigskip

We now turn to the proof of the telescopic formula \eqref{eq:tele_13}, which we also rewrite here for convenience:
\begin{multline}
\label{appA-formule3}
2 \, D_0 \, \Phi_j \, \Delta^2 \, \Phi_j \, = \, \left[ -(D \, \Delta \, \Phi_{j+1} +\Delta \, \Phi_j) \, \Delta \, \Phi_j +2 \, D \, \Phi_{j+1} \, D \, \Delta \, \Phi_{j+1} \right] \\
- \left[ -(D \, \Delta \, \Phi_j +\Delta \, \Phi_{j-1}) \, \Delta \, \Phi_{j-1} +2 \, D \, \Phi_j \, D \, \Delta \, \Phi_j \right] \, .
\end{multline}
We expand the left hand side of \eqref{appA-formule3} as follows:
\begin{align*}
2 \, D_0 \, \Phi_j \, \Delta^2 \, \Phi_j \, =& \, (\Phi_{j+1}-\Phi_{j-1}) \, (\Phi_{j+2} -4 \, \Phi_{j+1} +6 \, \Phi_j -4 \, \Phi_{j-1} +\Phi_{j-2}) \\
=& \, 6 \, \Phi_j \, (\Phi_{j+1} -\Phi_{j-1}) -4 \, (\Phi_{j+1}^2 -\Phi_{j-1}^2) +(\Phi_{j+1}-\Phi_{j-1}) \, (\Phi_{j+2} +\Phi_{j-2}) \\
=& \, \left[ 6 \, \Phi_j \, \Phi_{j+1} -4 \, \Phi_j^2 -4 \, \Phi_{j+1}^2 \right] -\left[ 6 \, \Phi_{j-1} \, \Phi_j -4 \, \Phi_{j-1}^2 -4 \, \Phi_j^2 \right] \\
& \, +(\Phi_{j+1}-\Phi_{j-1}) \, (\Phi_{j+2} +\Phi_{j-2}) \\
=& \, T_j -T_{j-1} \, ,
\end{align*}
with
\begin{align*}
T_j \, :=& \, 7 \, \Phi_j \, \Phi_{j+1} -4 \, \Phi_j^2 -4 \, \Phi_{j+1}^2 +\Phi_{j+1} \, \Phi_{j+2} -\Phi_{j-1} \, \Phi_{j+2} +\Phi_{j-1} \, \Phi_j \\
=& \, -4 \, (D \, \Phi_{j+1})^2 +(\Phi_{j+2} -\Phi_j) \, (\Phi_{j+1} -\Phi_{j-1}) \\
=& \, -4 \, (D \, \Phi_{j+1})^2 +(\Delta \, \Phi_{j+1} +2\, D\, \Phi_{j+1}) \, (2\, D\, \Phi_{j+1} -\Delta \, \Phi_j) \\
=& \, -\Delta \, \Phi_j \, \Delta \, \Phi_{j+1} +2\, D \, \Phi_{j+1} \, D \, \Delta \, \Phi_{j+1} 
\, = \, -(D \, \Delta \, \Phi_{j+1} +\Delta \, \Phi_j) \, \Delta \, \Phi_j +2 \, D \, \Phi_{j+1} \, D \, \Delta \, \Phi_{j+1} \, .
\end{align*}
This completes the proof of \eqref{appA-formule3}.
\bigskip

It only remains to prove the formula \eqref{eq:tele_14}, that is:
\begin{equation}
\label{appA-formule4}
\Phi_j \, \Delta^2 \, \Phi_j \, = \, (\Delta \, \Phi_j)^2 + \left[ \Phi_j \, D \, \Delta \, \Phi_{j+1} -D \, \Phi_{j+1} \, \Delta \, \Phi_j \right] 
-\left[ \Phi_{j-1} \, D \, \Delta \, \Phi_j -D \, \Phi_j \, \Delta \, \Phi_{j-1} \right] \, ,
\end{equation}
which is the discrete counterpart of the relation $u \, u'''' =(u'')^2 +(u \, u''' -u' \, u'')'$. We compute:
$$
\Phi_j \, \Delta^2 \, \Phi_j -(\Delta \, \Phi_j)^2 \, = \, \begin{pmatrix}
\Phi_{j+2} \\
\Phi_{j+1} \\
\Phi_j \\
\Phi_{j-1} \\
\Phi_{j-2} \end{pmatrix}^T \, \begin{pmatrix}
0 & 0 & 1/2 & 0 & 0 \\
0 & -1 & 0 & -1 & 0 \\
1/2 & 0 & 2 & 0 & 1/2 \\
0 & -1 & 0 & -1 & 0 \\
0 & 0 & 1/2 & 0 & 0 \end{pmatrix} \, \begin{pmatrix}
\Phi_{j+2} \\
\Phi_{j+1} \\
\Phi_j \\
\Phi_{j-1} \\
\Phi_{j-2} \end{pmatrix} \, ,
$$
and decompose the corresponding symetric matrix in a telescopic way:
$$
\begin{pmatrix}
0 & 0 & 1/2 & 0 & 0 \\
0 & -1 & 0 & -1 & 0 \\
1/2 & 0 & 2 & 0 & 1/2 \\
0 & -1 & 0 & -1 & 0 \\
0 & 0 & 1/2 & 0 & 0 \end{pmatrix} \, = \, \begin{pmatrix}
0 & 0 & 1/2 & 0 & 0 \\
0 & -1 & 0 & -1/2 & 0 \\
1/2 & 0 & 1 & 0 & 0 \\
0 & -1/2 & 0 & 0 & 0 \\
0 & 0 & 0 & 0 & 0 \end{pmatrix} \, - \, \begin{pmatrix}
0 & 0 & 0 & 0 & 0 \\
0 & 0 & 0 & 1/2 & 0 \\
0 & 0 & -1 & 0 & -1/2 \\
0 &1/2 & 0 & 1 & 0 \\
0 & 0 & -1/2 & 0 & 0 \end{pmatrix} \, .
$$
We have thus obtained the telescopic decomposition:
\begin{align*}
\Phi_j \, \Delta^2 \, \Phi_j -(\Delta \, \Phi_j)^2 \, =& \, \left[ \Phi_j \, \Phi_{j+2} -(\Phi_{j+1})^2 +(\Phi_j)^2 -\Phi_{j-1}\, \Phi_{j+1} \right] \\
& \, -\left[ \Phi_{j-1} \, \Phi_{j+1} -(\Phi_j)^2 +(\Phi_{j-1})^2 -\Phi_{j-2}\, \Phi_j \right] \, ,
\end{align*}
and \eqref{appA-formule4} eventually follows from rewriting the telescopic quantity in terms of $\Phi_j$, $D\, \Phi_{j+1}$, $\Delta \, \Phi_j$ and 
$D\, \Delta \, \Phi_{j+1}$:
$$
\Phi_j \, \Phi_{j+2} -(\Phi_{j+1})^2 +(\Phi_j)^2 -\Phi_{j-1}\, \Phi_{j+1} \, = \, \Phi_j \, (\Phi_{j+2} -3 \, \Phi_{j+1} +3 \, \Phi_j -\Phi_{j-1}) 
-(\Phi_{j+1} -\Phi_j) \, (\Phi_{j+1} -2 \, \Phi_j +\Phi_{j-1}) \, . 
$$

\bibliographystyle{alpha}
\bibliography{preprint}
\end{document}